\definecolor{dblue}{RGB}{0,0,178}
\definecolor{dgreen}{RGB}{31,183,41}
\definecolor{dred}{RGB}{174,0,0}
\definecolor{klimtpurple}{RGB}{158,52,158}
\definecolor{klimtblue}{RGB}{59,72,158} 
\definecolor{dklimtblue}{RGB}{20,25,158}
\definecolor{dklimtpurple}{RGB}{158,23,158}
\DeclareMathOperator{\Add}{Add}
\DeclareMathOperator{\Aut}{Aut}
\DeclareMathOperator{\cf}{cf}
\DeclareMathOperator{\dom}{dom}
\DeclareMathOperator{\hgt}{ht}
\DeclareMathOperator{\Ord}{Ord}
\DeclareMathOperator{\ot}{ot}
\DeclareMathOperator{\rank}{rk}
\DeclareMathOperator{\supp}{supp}
\newcommand*{\power}{\mathscr{P}}
\newcommand*{\ddc}{\dot{c}}
\newcommand*{\ddC}{\dot{C}}
\newcommand*{\ddF}{\dot{F}}
\newcommand*{\ddx}{\dot{x}}
\newcommand*{\bbP}{\mathbb{P}}
\newcommand*{\bbR}{\mathbb{R}}
\newcommand*{\calU}{\mathcal{U}}
\newcommand*{\afont}[1]{\mathsf{#1}}
\newcommand*{\AC}{\afont{AC}}
\newcommand*{\AW}{\afont{W}}
\newcommand*{\BPI}{\afont{BPI}}
\newcommand*{\DC}{\afont{DC}}
\newcommand*{\KWP}{\afont{KWP}}
\newcommand*{\PP}{\afont{PP}}
\newcommand*{\SVC}{\afont{SVC}}
\newcommand*{\UT}{\afont{CUT}}
\newcommand*{\WO}{\afont{WO}}
\newcommand*{\ZF}{\afont{ZF}}
\newcommand*{\ZFC}{\afont{ZFC}}
\newcommand*{\abs}[1]{\lvert#1\rvert}
\newcommand*{\Inj}[2]{{#2}^{\underline{\smash{#1}}}}
\newcommand*{\tup}[1]{\langle#1\rangle}
\newcommand*{\SetSymbol}[1][]{
\mathclose{}\nonscript\;#1|\nonscript\;\mathopen{}
}
\DeclarePairedDelimiterX{\Set}[1]{\{}{\}}{%
	\renewcommand\mid{\SetSymbol[\delimsize]}
	#1
}
\newcommand*{\1}{\mathds{1}}
\newcommand*{\concat}{{{}^{\frown}}}
\newcommand*{\defeq}{\mathrel{\vcenter{\baselineskip0.5ex \lineskiplimit0pt
                     \hbox{\scriptsize.}\hbox{\scriptsize.}}}%
                     =}
\newcommand*{\forces}{\mathrel{\Vdash}}
\newcommand*{\lomega}{{{<}\omega}}
\newcommand{\res}{\nobreak\mskip2mu\mathpunct{}\nonscript
  \mkern-\thinmuskip{\restriction}\mskip6muplus1mu\relax}
\newcommand*{\tmod}{\mathfrak{N}_{\aleph_1}}
\newcommand*{\varep}{\varepsilon}
\setlist[enumerate,1]{label=\textup{\arabic*.},ref=\textup{(\arabic*)}}
\setlist[enumerate,2]{label=\textup{(\roman*)},ref=\textup{(\roman*)}}
\newtheoremstyle{boldrk}
  {}{}
  {}{}
  {\bfseries}{.}
  {5pt plus 1pt minus 1pt}{}
\theoremstyle{plain}
\newtheorem{thm}{Theorem}[section]
\newtheorem{claim}{Claim}[thm]
\newtheorem{cor}[thm]{Corollary}
\newtheorem{prop}[thm]{Proposition}
\newtheorem*{fact}{Fact}
\newtheorem*{thmast}{Theorem}
\theoremstyle{boldrk}
\newtheorem{qn}[thm]{Question}
\newtheorem*{rk}{Remark} 
\theoremstyle{definition}
\newenvironment*{poc}[1][Proof of Claim]{\begin{proof}[#1]}{\end{proof}}
\Crefname{thm}{Theorem}{Theorems}
\Crefname{claim}{Claim}{Claims}
\Crefname{cor}{Corollary}{Corollaries}
\Crefname{lem}{Lemma}{Lemmas}
\Crefname{prop}{Proposition}{Propositions}
\Crefname{fact}{Fact}{Facts}
\Crefname{thmalph}{Theorem}{Theorems}
\Crefname{qn}{Question}{Questions}
\Crefname{defn}{Definition}{Definitions}
\Crefname{conj}{Conjecture}{Conjectures}
\title{Local reflections of choice}
\author{Calliope Ryan-Smith}
\email{c.Ryan-Smith@leeds.ac.uk}
\urladdr{https://academic.calliope.mx}
\address{School of Mathematics, University of Leeds, LS2 9JT, UK}
\date{\today{}}
\keywords{Axiom of choice, small violations of choice, symmetric extension, Lindenbaum number}
\thanks{The author's work was financially supported by EPSRC via the Mathematical Sciences Doctoral Training Partnership [EP/W523860/1]. For the purpose of open access, the author has applied a Creative Commons Attribution (CC BY) licence to any Author Accepted Manuscript version arising from this submission. No data are associated with this article.}
\subjclass[2020]{Primary: 03E25; Secondary: 03E10}
\begin{document}

\begin{abstract}
Under the assumption of small violations of choice with seed \(S\) (\(\SVC(S)\)), the failure of many choice principles reflect to local properties of \(S\), which can be a helpful characterisation for preservation proofs. We demonstrate the reflections of \(\DC\), \(\AC_\lambda\), \(\PP\), and other important forms of choice. As a consequence, we show that if \(S\) is infinite then \(S\) can be partitioned into \(\omega\) many non-empty subsets.
\end{abstract}

\maketitle

\section{Introduction}

It is often the case that violating a consequence of choice is easier than verifying that a consequence of choice has been preserved. For example, to violate \(\AC_\omega\) in a symmetric extension, one need only add a countable family with no choice function. On the other hand, to ensure that \(\AC_\omega\) has been preserved one must check `every' countable family. Blass's \emph{small violations of choice} afford us an alternative approach. In any symmetric extension of a model of \(\ZFC\) there is a `seed' \(S\) such that \(\SVC(S)\) holds: For all non-empty \(X\) there is an ordinal \(\eta\) such that \(S\times\eta\) surjects onto \(X\) (see \cite[Theorem~4.3]{blass_injectivity_1979}).\footnote{Indeed \((\exists S)\SVC(S)\) has since been shown to be \emph{equivalent} to ``the universe is a set-generic symmetric extension of a model of \(\ZFC\)'', see \cite{usuba_choiceless_2021}.} In fact,  under this assumption, many violations of choice are reflected back and witnessed locally to \(S\). For example, to verify \(\AC_\omega\), one need only check \(\AC_\omega(S)\). Indeed, already in \cite{blass_injectivity_1979} the idea of a local reflection of choice was present, and the concept has appeared variously throughout the literature, summarised by the following. All notation will be introduced in the text as the results are proved.

\begin{thmast}
Assume \(\SVC(S)\) and \(\SVC^+(T)\).
\begin{enumerate}
\item (Blass, \cite{blass_injectivity_1979}) \(\AC\) is equivalent to ``\(S\) can be well-ordered''.
\item (Pincus--Blass, \cite{blass_injectivity_1979}) \(\BPI\) is equivalent to ``there is a fine ultrafilter on \([S]^\lomega\)''.
\item (Karagila--Schilhan, \cite{karagila_intermediate_2024}) \(\KWP_\alpha^\ast\) is equivalent to ``there is \(\eta\in\Ord\) such that \(\abs{S}\leq^\ast\abs{\power^\alpha(\eta)}\)''.
\item (Karagila--Schilhan, \cite{karagila_intermediate_2024}) \(\KWP_\alpha\) is equivalent to ``there is \(\eta\in\Ord\) such that \(\abs{T}\leq\abs{\power^\alpha(\eta)}\)''.
\end{enumerate}
\end{thmast}

Continuing this, we prove local equivalents of several common forms of choice, such as the principle of dependent choices and well-ordered choice. 

\begin{thmast}
Assume \(\SVC(S)\) and \(\SVC^+(T)\).
\begin{enumerate}
\item (\cref{prop:reflect-dc}) \(\DC_\lambda\) is equivalent to ``every \(\lambda\)-closed subtree of \(S^{{<}\lambda}\) has a maximal node or a chain of order type \(\lambda\)''.
\item (\cref{prop:reflect-ac-lambda}) \(\AC_\lambda\) is equivalent to \(\AC_\lambda(S)\), which is in turn equivalent to ``every function \(g\colon S\to\lambda\) splits''.
\item (\cref{cor:acwo-equiv}) \(\AC_\WO\) is equivalent to \(\AC_{{<}\aleph^\ast(S)}(S)\).
\item (\cref{prop:ac-x}) \(\AC_X\) is equivalent to \(\AC_X(S)\).
\item (\cref{prop:ut}) Assume that \(\cf(\omega_1)=\omega_1\). Then \(\UT\) is equivalent to \(\UT(T)\).
\item (\cref{prop:aw+}) \(\AW_\lambda\) is equivalent to \(\AW_\lambda(T)\).
\item (\cref{prop:aw+-ast}) \(\AW_\lambda^\ast\) is equivalent to \(\AW_\lambda^\ast(T)\).
\item (\cref{prop:pp}) \(\PP\) is equivalent to \(\PP\res T\land\AC_\WO\).
\item (\cref{prop:pp-not-res}) \(\PP(S)\land\AC_\WO\) implies \(\SVC^+(S)\). Hence, \(\PP\) is equivalent to \(\PP(S)\land\PP\res S\land\AC_\WO\).
\end{enumerate}
\end{thmast}

\subsection{Structure of the paper}

In \cref{sec:preliminaries} we go over some preliminaries for the paper. 
In \cref{sec:reflections} we describe reflections of various consequences of choice in the context of small violations of choice.

\section{Preliminaries}\label{sec:preliminaries}

We work in \(\ZF\). We denote the class of ordinals by \(\Ord\). Given a set of ordinals \(X\), we use \(\ot(X)\) to denote the order type of \(X\). Given an ordinal \(\alpha\), \(\cf(\alpha)\) is the cofinality of \(\alpha\) (the least cardinality of a cofinal subset). For sets \(X,Y\), \(\abs{X}\leq\abs{Y}\) means that there is an injection \(X\to Y\), \(\abs{X}\leq^\ast\abs{Y}\) means that \(X=\emptyset\) or there is a surjection \(Y\to X\),\footnote{Equivalently, there is a partial surjection \(Y\to X\).} and \(\abs{X}=\abs{Y}\) means that there is a bijection \(X\to Y\). For a well-orderable set \(X\), we use \(\abs{X}\) to mean \(\min\Set{\alpha\in\Ord\mid\abs{\alpha}=\abs{X}}\). By a \emph{cardinal} we mean a well-ordered cardinal, that is \(\alpha\in\Ord\) such that \(\abs{\alpha}=\alpha\). For a set \(Y\) and a cardinal \(\kappa\), we write \([Y]^\kappa\) to mean \(\Set{A\subseteq Y\mid\abs{A}=\abs{\kappa}}\) and \([Y]^{{<}\kappa}\) to mean \(\Set{A\subseteq Y\mid\abs{A}<\abs{\kappa}}\). Given a set \(X\), the \emph{Hartogs number} of \(X\) is \(\aleph(X)=\min\Set{\alpha\in\Ord\mid\abs{\alpha}\nleq\abs{X}}\). Dually, we define the \emph{Lindenbaum number} of \(X\) to be \(\aleph^\ast(X)=\min\Set{\alpha\in\Ord\mid\abs{\alpha}\nleq^\ast\abs{X}}\). It is a theorem of \(\ZF\) that \(\aleph(X)\) and \(\aleph^\ast(X)\) are well-defined cardinals. We denote concatenation of tuples by \(\concat\), so if \(f\colon\alpha\to X\) and \(g\colon\beta\to X\) then \(f\concat g\) is the function \(\alpha+\beta\to X\) given by
\begin{equation}
f\concat g(\gamma)=\begin{cases}
f(\gamma)&\gamma<\alpha\\
g(\delta)&\gamma=\alpha+\delta.
\end{cases}
\end{equation}

\subsection{Small violations of choice} Introduced in \cite{blass_injectivity_1979}, for a set \(S\) (known as the \emph{seed}), \(\SVC(S)\) is the statement ``for all \(X\) there is an ordinal \(\eta\) such that \(\abs{X}\leq^\ast\abs{S\times\eta}\)'', and \(\SVC\) is the statement \((\exists S)\SVC(S)\). We shall also make use of the injective form, \(\SVC^+(S)\) meaning ``for all \(X\), there is an ordinal \(\eta\) such that \(\abs{X}\leq\abs{S\times\eta}\)''. See \cite{ryan_hartogs_2024} for a more detailed overview of \(\SVC\) and \(\SVC^+\).

\begin{fact}
\(\SVC^+(S)\implies\SVC(S)\implies\SVC^+(\power(S))\).
\end{fact}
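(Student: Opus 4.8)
The plan is to treat the two implications separately, the first being essentially formal and the second carrying the real content. For \(\SVC^+(S)\implies\SVC(S)\): fix any set \(X\). By \(\SVC^+(S)\) there is an ordinal \(\eta\) and an injection \(f\colon X\to S\times\eta\). Since an injection always yields a partial surjection in the reverse direction — send each point of the range back to its unique preimage, and leave the map undefined off the range — we obtain \(\abs{X}\leq^\ast\abs{S\times\eta}\). As \(X\) was arbitrary, \(\SVC(S)\) holds. This direction needs no choice and no further ideas.

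For \(\SVC(S)\implies\SVC^+(\power(S))\), fix any set \(X\); I want an ordinal \(\eta\) with \(\abs{X}\leq\abs{\power(S)\times\eta}\). By \(\SVC(S)\) there is an ordinal \(\eta\) and a partial surjection \(g\colon S\times\eta\to X\), so for each \(x\in X\) the fibre \(g^{-1}(\{x\})\) is a non-empty subset of \(S\times\eta\), and distinct \(x\) yield disjoint fibres. The naive move of sending \(x\) to its fibre lands in \(\power(S\times\eta)\), which is canonically \(\power(S)^\eta\) and far too large to be of the form \(\power(S)\times\eta'\). The key trick, which avoids any appeal to choice, is to use the well-ordering of \(\eta\) to extract a single column: for each \(x\in X\) set \(\alpha_x=\min\Set{\alpha<\eta\mid(\exists s\in S)\,(s,\alpha)\in g^{-1}(\{x\})}\), which exists because the fibre is non-empty, and let \(C_x=\Set{s\in S\mid(s,\alpha_x)\in g^{-1}(\{x\})}\), a non-empty subset of \(S\).

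I claim \(x\mapsto(C_x,\alpha_x)\) is an injection \(X\to\power(S)\times\eta\). Indeed, if \((C_x,\alpha_x)=(C_{x'},\alpha_{x'})\) then, choosing any \(s\in C_x\) (possible as \(C_x\neq\emptyset\)), the pair \((s,\alpha_x)\) lies in both \(g^{-1}(\{x\})\) and \(g^{-1}(\{x'\})\); since \(g\) is a function this forces \(x=x'\). Hence \(\abs{X}\leq\abs{\power(S)\times\eta}\), and as \(X\) was arbitrary, \(\SVC^+(\power(S))\) holds.

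The only genuinely delicate point is the second implication, where one must convert a surjection into an injection without choosing fibre representatives; the device of recording the least column index together with its slice does exactly this, and the verification of injectivity reduces to the mere functionality of \(g\). Everything else is bookkeeping.
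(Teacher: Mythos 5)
Your proof is correct, and it is the standard argument for this fact, which the paper states without proof (deferring to the cited literature): the reversal of an injection gives the first implication, and the ``least non-empty column'' device \(x\mapsto\tup{C_x,\alpha_x}\) is exactly the canonical choice-free way to turn a partial surjection \(S\times\eta\to X\) into an injection \(X\to\power(S)\times\eta\). Nothing is missing; in particular you correctly avoid the trap of landing in \(\power(S\times\eta)\), and the single witness \(s\in C_x\) used in the injectivity check is existential instantiation, not an appeal to choice.
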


\subsection{Forcing and symmetric extensions}

While no knowledge of forcing or symmetric extensions is required for the main results, forcing is used in the proof of \cref{prop:pp-optimal,lem:pp-not-res-optimal}. For a thorough introduction to forcing and symmetric extensions one can go to \cite[Chapters~14 and 15]{jech_set_2003}, and for a more specific overview of the notation and terminology used in the proof of \cref{prop:pp-optimal,lem:pp-not-res-optimal}, one should see \cite{karagila_which_2024}. For a brief overview of the specific ideas used in this paper, continue reading this section.

A \emph{forcing} is a partial order \(\tup{\bbP,\leq}\) with maximal element \(\1\). We refer to the elements of \(\bbP\) as \emph{conditions}. We force downwards, so we say that \(q\) is \emph{stronger} than \(p\) (or \emph{extends} \(p\)) if \(q\leq p\). We say \(\bbP\) has the \emph{countable chain condition} (c.c.c.) if every antichain \(A\subseteq\bbP\) is countable. Given a set \(X\) of \(\bbP\)-names, we denote by \(X^\bullet\) the \(\bbP\)-name \(\Set{\tup{\1,\ddx}\mid\ddx\in X}\). By \(\Add(A,B)\) we mean the forcing with conditions that are partial functions \(p\colon B\times A\to 2\) such that \(\abs{\dom(p)}<\abs{A}\), and \(q\leq p\) when \(q\supseteq p\). For all \(B\), \(\Add(\omega,B)\) is c.c.c. Given a generic filter \(G\subseteq\Add(A,B)\), \(c=\bigcup G\) is a function \(B\times A\to2\), which we think of as encoding \(B\)-many functions \(A\to2\), so for \(b\in B\), the \(b\)\textup{th} Cohen subset of \(A\) added is \(\Set{a\in A\mid c(b,a)=1}\). This is formalised by the name
\begin{equation}
\ddc_b=\Set*{\tup{p,\check{a}}\mid a\in A,p\in\Add(A,B),p(b,a)=1}.
\end{equation}
In \cref{prop:pp-optimal} we consider the Feferman-style model \(\tmod\) from \cite{truss_models_1974}. This is given by letting \(G\) be an \(L\)-generic filter of \(\Add(\omega,\omega_1)\), and taking the least model \(\tmod\vDash\ZF\) such that \(L\cup\Set{\tup{c_\beta\mid\beta<\alpha}\mid\alpha<\omega_1}\subseteq M\). Here \(c_\beta\) is the \(\beta\)\textup{th} Cohen real \(\Set{n<\omega\mid\bigcup G(\beta,n)=1}\) as above. In \cite{truss_models_1974}, Truss shows that \(\tmod\) is a model of \(\AC_\WO\), \(V=L(w(\bbR))\), where \(w(\bbR)\) is well-orders of subsets of \(\bbR\), and that every subset of \(\bbR\) in \(\tmod\) is either well-orderable or contains a perfect subset.

In \cref{lem:pp-not-res-optimal} we consider Cohen's first model \(M\) from \cite{cohen_independence_1963}. This is given by letting \(G\) be \(L\)-generic for \(\Add(\omega,\omega)\) and taking the least model \(M\vDash\ZF\) such that \(L\cup\Set{c_n\mid n<\omega}\subseteq M\). Here \(c_n\) is again the \(n\)\textup{th} Cohen real. In \cite{cohen_independence_1963}, Cohen shows that \(M\) is of the form \(L(A)\), where \(A=\Set{c_n\mid n<\omega}\) is a Dedekind-finite set of reals. Additional information on Cohen's first model can be found in \cite[Sections~5.3 and 5.5]{jech_axiom_1973}.

\subsection{Form numbers}

Many of the forms of choice mentioned in this text are described and thoroughly examined for interdependence and equivalent statements in \cite{howard_consequences_1998} (to the extent that the tools at the time allowed). In particular, the consequences of the axiom of choice found are given numerical form numbers, which many still find helpful as a cataloguing tool. We therefore would like to remark the following form numbers of some of the subjects of this paper: \(\AC\) is Form~1; \(\BPI\) is Form~14; \(\UT\) is Form~31; \(\cf(\omega_1)=\omega_1\) is Form~34; \(\AC_\WO\) is Form~40; \(\AW_\lambda\) is Form~71(\(\alpha\)), where \(\lambda=\aleph_\alpha\); \(\KWP_n\) is Form~81(\(n\)), where \(n<\omega\); \(\AC_\lambda\) is Form~86(\(\alpha\)), where \(\lambda=\aleph_\alpha\); \(\DC_\lambda\) is Form~87(\(\alpha\)), where \(\lambda=\aleph_\alpha\); \(\PP\) is Form~101; \(\SVC\) is Form~191.

\section{Reflections}\label{sec:reflections}

\subsection{The principle of dependent choices}

A \emph{tree} is a partially ordered set \(\tup{T,\leq}\) with minimum element \(0_T\) such that, for all \(t\in T\), \(\Set{s\in T\mid s\leq t}\) is well-ordered by \(\leq\). This gives rise to a notion of rank \(\rank(t)=\sup\Set{\rank(s)+1\mid s<_Tt}\), of height \(\hgt(T)=\sup\Set{\rank(t)+1\mid t\in T}\), and of levels \(T_\alpha=\Set{t\in T\mid\rank(t)=\alpha}\). For \(x\in T_\alpha\) and \(\beta\leq\alpha\), we denote by \(x\res\beta\) the unique \(y\in T_\beta\) such that \(y\leq x\). A \emph{chain} is a set \(C\subseteq T\) such that for all \(s,t\in C\), \(s\leq t\) or \(t\leq s\). For an ordinal \(\alpha\), \(T\) is \emph{\(\alpha\)-closed} if every chain in \(T\) of order type less than \(\alpha\) has an upper bound. For an infinite cardinal \(\lambda\), \(\DC_\lambda\) is the statement ``every \(\lambda\)-closed tree has a maximal node or a chain of order type \(\lambda\)''.

We note that \(\DC_\lambda\) was originally defined in \cite{levy_interdependence_1964} as follows: Let \(X\) be a non-empty set and \(R\) a binary relation such that for all \(\alpha<\lambda\) and all \(\tup{x_\beta\mid\beta<\alpha}\in X^\alpha\) there is \(y\in X\) such that \(\tup{x_\beta\mid\beta<\alpha}\mathrel{R}y\). Then there is \(f\colon\lambda\to X\) such that, for all \(\alpha<\lambda\), \(f\res\alpha\mathrel{R}f(\alpha)\). This alternative definition is equivalent to the one that we are working with \cite[Theorem~1]{wolk_principle_1983}.

Given a set \(X\) and a limit ordinal \(\alpha\), we endow the set \(X^{{<}\alpha}=\bigcup\Set{X^\beta\mid\beta<\alpha}\) with a tree structure given by end-extension: \(f\leq g\) if and only if \(f\subseteq g\). A \emph{subtree} of \(X^{{<}\alpha}\) is non-empty \(T\subseteq X^{{<}\alpha}\) such that whenever \(f\leq g\) and \(g\in T\), \(f\in T\). In this case, \(f\res\beta\) meaning ``the function \(f\) restricted to domain \(\beta\)'' is the same as \(f\res\beta\) meaning ``the unique \(g\) of rank \(\beta\) such that \(g\leq f\)''.

\begin{prop}\label{prop:reflect-dc}
Assume \(\SVC(S)\). For each infinite cardinal \(\lambda\), the following are equivalent:
\begin{enumerate}
\item \(\DC_\lambda\);
\item every \(\lambda\)-closed subtree of \(S^{{<}\lambda}\) has a maximal node or a chain of order type \(\lambda\).
\end{enumerate}
\end{prop}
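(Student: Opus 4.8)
The implication \((1)\Rightarrow(2)\) is immediate, since a \(\lambda\)-closed subtree of \(S^{{<}\lambda}\) is in particular a \(\lambda\)-closed tree. For the converse I would argue contrapositively: assuming \((1)\) fails, I manufacture a counterexample to \((2)\). So suppose \(T\) is a \(\lambda\)-closed tree with neither a maximal node nor a cofinal branch. By the remark preceding the statement I may assume \(\hgt(T)\leq\lambda\), so that every node has rank \({<}\lambda\) and the predecessors of a node of rank \(\alpha\) form a chain of order type \(\alpha\). Invoking \(\SVC(S)\), I fix an ordinal \(\eta\) and a surjection \(F\colon S\times\eta\to T\).

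The plan is to build a \(\lambda\)-closed subtree \(T'\subseteq S^{{<}\lambda}\) together with an order-preserving map \(\pi\) from the non-limit-length nodes of \(T'\) into \(T\), by recursion on length, so that along every branch the \(T\)-ranks of the \(\pi\)-values strictly increase. The root \(\langle\rangle\) is sent to \(0_T\). At a node \(\sigma\) of successor length (or the root) on which \(\pi(\sigma)=t\) is already defined, for each \(s\in S\) with \(F(s,\beta)\in\op{succ}(t)\) for some \(\beta\) I declare \(\sigma\concat\langle s\rangle\in T'\) and set \(\pi(\sigma\concat\langle s\rangle)=F(s,\beta)\) for the \emph{least} such \(\beta\); here \(\op{succ}(t)\) denotes the set of immediate successors of \(t\). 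At a node \(\sigma\) of limit length \(\delta\), which I keep in \(T'\) as a passive node without assigning \(\pi(\sigma)\), the already-defined \(\pi\)-values along the branch below \(\sigma\) form a chain \(C_\sigma\) in \(T\) of order type \(\delta\); by \(\lambda\)-closure \(C_\sigma\) has an upper bound, and I let \(V_\sigma\) be its nonempty set of rank-\(\delta\) upper bounds. For each \(s\in S\) meeting \(V_\sigma\) (that is, \(F(s,\beta)\in V_\sigma\) for some \(\beta\)) I declare \(\sigma\concat\langle s\rangle\in T'\) and set \(\pi(\sigma\concat\langle s\rangle)=F(s,\beta)\) for the least such \(\beta\). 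Finally \(T'\) consists of exactly those sequences all of whose initial segments are admitted by this recursion.

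Three things then need checking. First, \(T'\subseteq S^{{<}\lambda}\): crossing each limit costs one extra letter, so a node tracking a \(T\)-node of rank \(\alpha<\lambda\) has length an ordinal of cardinality \(\leq\abs{\alpha}<\lambda\), hence \({<}\lambda\). Second, \(T'\) is \(\lambda\)-closed: given a chain in \(T'\) of order type \(\mu<\lambda\), its \(\pi\)-values give a chain in \(T\) of order type \({<}\lambda\), whose ranks are bounded below \(\lambda\) because \(T\) is \(\lambda\)-closed and \(\hgt(T)\leq\lambda\); consequently the lengths along the chain are bounded below \(\lambda\), so the union of the chain again lies in \(S^{{<}\lambda}\) and is an upper bound in \(T'\). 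Third, \(T'\) has no maximal node (any \(\sigma\) extends, since \(\op{succ}(t)\neq\emptyset\) at non-limit nodes and \(V_\sigma\neq\emptyset\) at limit nodes, using that \(T\) has no maximal node) and no cofinal branch: a cofinal branch \(b\) of \(T'\) would have its \(\pi\)-values strictly increasing in rank and cofinal in \(\hgt(T)\), so the downward closure of \(\pi``b\) would be a chain meeting every rank below \(\hgt(T)\), hence a cofinal branch of \(T\), contradicting the choice of \(T\).

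The main obstacle, and the reason the recursion is shaped this way, is that at a limit stage the chain \(C_\sigma\) may have several rank-\(\delta\) upper bounds while \(S\) is not well-ordered, so no single upper bound can be selected without choice. I sidestep this by never choosing a limit node canonically: the set \(V_\sigma\) is definable from the branch, and I let the next letter \(s\in S\) together with the least witnessing \(\beta<\eta\) (this is where the well-order of \(\eta\), or a minitial surjection, is used) record which upper bound is taken. Thus every actual choice is reduced either to a least-ordinal selection in \(\eta\) or to the mere nonemptiness of a definable set guaranteed by \(\lambda\)-closure, and no appeal to choice beyond \(\SVC(S)\) is made. The degenerate cases, namely \(S\) finite (which forces \(\AC\)) or \(\hgt(T)<\lambda\), are handled separately and quickly.
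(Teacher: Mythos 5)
Your proof is correct, and its skeleton is the same as the paper's: fix a surjection \(F\colon S\times\eta\to T\), build a subtree of \(S^{{<}\lambda}\) together with an order-preserving map into \(T\), resolve every choice either by a least witness in \(\eta\) or by mere nonemptiness of a definable set, and play the constructed tree against hypothesis (2). The genuine difference is your treatment of limit levels, and it is substantive. The paper puts the cofinal branches of the partial tree in as limit-level nodes and defines \(\iota(x)=\sup\{\iota(x\res\beta)\mid\beta<\alpha\}\); in a general tree the image chain can have several pairwise incomparable upper bounds of minimal rank, so this least upper bound need not exist, and selecting one upper bound per branch is itself a choice problem --- exactly the issue you identify. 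Your device of keeping limit nodes passive and letting the next letter \(s\in S\), with least witnessing \(\beta<\eta\), record which rank-\(\delta\) upper bound is taken avoids any such selection, so on this point your write-up is more careful than the paper's. A second structural difference: the paper invokes hypothesis (2) at every limit stage (to guarantee the partial tree has cofinal branches, which become the limit nodes) and once more at the end, whereas your contrapositive formulation never needs to invoke it --- if the construction dies at a limit, your \(T'\) simply has smaller height and is already the desired counterexample to (2); what the paper's version buys in exchange is that \(\iota\) is exactly level-preserving, making the final cofinality argument one line. Two points you leave implicit and should spell out: (i) nonemptiness of \(V_\sigma\) needs the observation that any upper bound \(u\) of \(C_\sigma\) has a predecessor (or equals a node) of rank exactly \(\delta\) which is still an upper bound of \(C_\sigma\); (ii) your claim that a cofinal branch \(b\) of \(T'\) has \(\pi\)-ranks cofinal in \(\hgt(T)\) needs the argument that, since \(T'\) is closed under unions of chains of bounded length, any maximal chain of \(T'\) must have lengths unbounded in \(\lambda\) (otherwise its union would properly extend it), which forces \(\hgt(T)=\lambda\) and puts the ranks cofinally in it. Both are short and entirely in the spirit of what you wrote.
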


\begin{proof}
Certainly if \(\DC_\lambda\) holds then \(\DC_\lambda\) holds for subtrees of \(S^{{<}\lambda}\), so assume instead that every \(\lambda\)-closed subtree of \(S^{{<}\lambda}\) has a maximal node or a chain of order type \(\lambda\). Let \(T\) be a \(\lambda\)-closed tree, and let \(\eta\in\Ord\) and \(f\colon S\times\eta\to T\) be a surjection. We shall define a function \(\iota\colon S^{{<}\lambda}\to T\cup\Set{\bot}\), defining \(\iota(x)\) by induction on \(\dom(x)\), as follows: For \(x\in S^\alpha\),
\begin{enumerate}
\item let \(\iota(x)=\bot\) if there is \(\beta<\alpha\) such that \(\iota(x\res\beta)=\bot\); otherwise
\item if \(x=y\concat\tup{s}\), then let \(\iota(x)=f(s,\gamma)\), where \(\gamma\) is least such that \(f(s,\gamma)\) is an immediate successor of \(\iota(y)\); and
\item if \(x\) has limit rank, then let \(\iota(x)=\sup\Set{\iota(x\res\beta)\mid\beta<\alpha}\).
\end{enumerate}
Note that if \(\iota(x)\neq\bot\) then \(\iota(x)\in T_\alpha\), and that for all \(\beta<\alpha\), \(\iota(x\res\beta)=\iota(x)\res\beta\).

Let \(A=\Set{x\in S^{{<}\lambda}\mid\iota(x)\neq\bot}\).
\begin{claim}
\(A\) is a \(\lambda\)-closed subtree of \(S^{{<}\lambda}\).
\end{claim}
\begin{poc}
If \(x\in A\) and \(y<x\), then since \(\iota(x)\neq\bot\), \(\iota(y)\neq\bot\). Furthermore, \(\iota(\emptyset)=0_T\), so \(A\neq\emptyset\) and is indeed a subtree. If \(C\subseteq A\) is a chain of length less than \(\lambda\), then let \(b=\bigcup C\in S^{{\leq}\lambda}\). If \(b\in S^\lambda\) then \(\Set{\iota(b\res\alpha)\mid\alpha<\lambda}\) is a chain of order type \(\lambda\) in \(T\). So assume otherwise, that \(b\in S^{{<}\lambda}\). If \(b\) has limit rank \(\alpha\), say, then \(\iota(b)=\sup\Set{\iota(b\res\beta)\mid\beta<\alpha}\neq\bot\), since \(\iota(b\res\beta)\neq\bot\) for all \(\beta<\alpha\). Thus, \(b\in A\). If instead \(b\) has successor rank, then \(b\in C\), so certainly \(b\in A\) as required.
\end{poc}
If \(x\in A\) is a maximal node, then \(\iota(x)\) is maximal in \(T\): Otherwise, \(\iota(x)\) has an immediate successor, say \(t\), and \(t=f(s,\gamma)\) some \(\gamma\). Then \(\iota(x\concat\tup{s})\neq\bot\), and so \(x\) is not maximal in \(A\), contradicting our assumption. Finally, if \(A\) has no maximal nodes then there is a chain \(C\subseteq A\) of order type \(\lambda\). In this case, \(\iota``C\) is a chain in \(T\) of order type \(\lambda\) as required.
\end{proof}

\subsection{Well-ordered choice}

For a set \(X\), \(\AC_X\) is the statement ``if \(\emptyset\neq Y\), \(\emptyset\notin Y\), and \(\abs{Y}\leq^\ast\abs{X}\), then \(\prod Y\neq\emptyset\)'', where \(\prod Y=\Set{f\colon Y\to\bigcup Y\mid(\forall y\in Y)f(y)\in y}\) is the set of choice functions. \(\AC_\WO\) means \((\forall\alpha\in\Ord)\AC_\alpha\). For \(\alpha\in\Ord\), \(\AC_{{<}\alpha}\) means \((\forall\beta<\alpha)\AC_\beta\). \(\AC_X(A)\) is \(\AC_X\) for families of subsets of \(A\): If \(Y\subseteq\power(A)\setminus\Set{\emptyset}\) is non-empty and \(\abs{Y}\leq^\ast\abs{X}\) then \(\prod Y\neq\emptyset\). \(\AC_{{<}\alpha}(A)\) means \((\forall\beta<\alpha)\AC_\beta(A)\).

Given a function \(g\colon X\to Y\), we say that \(g\) \emph{splits} if there is a partial function \(h\colon Y\to X\) such that \(\dom(h)=g``X\) and \(gh(y)=y\) for all \(y\in g``X\).

We say that a set \(X\) is \emph{Dedekind-finite} if \(\abs{\omega}\nleq\abs{X}\). Otherwise, it is \emph{Dedekind-infinite}.

\begin{prop}\label{prop:reflect-ac-lambda}
Assume \(\SVC(S)\). For each infinite cardinal \(\lambda\), the following are equivalent:
\begin{enumerate}
\item\label{prop:ac-lambda;item:ac-lambda} \(\AC_\lambda\);
\item \(\AC_\lambda(S)\);
\item\label{prop:ac-lambda;item:splitting} every function \(g\colon S\to\lambda\) splits.
\end{enumerate}
\end{prop}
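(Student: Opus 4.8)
The plan is to establish the cycle \(\eqref{prop:ac-lambda;item:ac-lambda}\Rightarrow(2)\Rightarrow\eqref{prop:ac-lambda;item:splitting}\Rightarrow\eqref{prop:ac-lambda;item:ac-lambda}\). The first two implications are soft and should be disposed of immediately; all the real work, and all the use of \(\SVC(S)\), goes into the last one. For \(\eqref{prop:ac-lambda;item:ac-lambda}\Rightarrow(2)\) there is nothing to do: \(\AC_\lambda(S)\) is by definition the restriction of \(\AC_\lambda\) to families drawn from \(\power(S)\setminus\Set{\emptyset}\). For \((2)\Rightarrow\eqref{prop:ac-lambda;item:splitting}\), given \(g\colon S\to\lambda\) I would consider its fibres \(\Set{g^{-1}(\beta)\mid\beta\in g``S}\): these are non-empty, pairwise disjoint subsets of \(S\) indexed injectively by \(g``S\subseteq\lambda\), so the family has size \({\leq^\ast}\,\abs{\lambda}\). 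A choice function provided by \(\AC_\lambda(S)\) is exactly a partial map \(h\) with \(\dom(h)=g``S\) and \(h(\beta)\in g^{-1}(\beta)\), i.e.\ \(gh(\beta)=\beta\); thus \(g\) splits.

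The substantive direction is \(\eqref{prop:ac-lambda;item:splitting}\Rightarrow\eqref{prop:ac-lambda;item:ac-lambda}\). Let \(Y\) be a family with \(\emptyset\neq Y\), \(\emptyset\notin Y\) and \(\abs{Y}\leq^\ast\abs{\lambda}\); fixing a surjection \(\lambda\to Y\) and sending each \(y\in Y\) to the least ordinal mapping to it, I may reindex as \(Y=\Set{A_\beta\mid\beta\in I}\) with \(I\subseteq\lambda\) and the \(A_\beta\) distinct, so it suffices to produce \(a_\beta\in A_\beta\) for each \(\beta\in I\). Here \(\SVC(S)\) enters: form the disjoint union \(D=\Set{\tup{\beta,a}\mid\beta\in I,\ a\in A_\beta}\) and take a minitial partial surjection \(F\colon S\times\eta\to D\). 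The point of using \(D\) rather than \(\bigcup Y\) is that every value \(F(s,\xi)\) now carries a single, well-defined index coordinate, so the recipe ``read off the index of \(F(s,0)\)'' defines an honest function \(g\colon S\to\lambda\). Splitting \(g\) yields, for each \(\beta\in g``S\), some \(s\) with \(F(s,0)\in\Set{\beta}\times A_\beta\), and hence an element of \(A_\beta\), while the least-\(\xi\) device keeps everything canonical. If \(g\) were onto \(I\) we would be done.

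The main obstacle is precisely that \(g\) need not be surjective onto \(I\): an index \(\beta\) whose sets are reached by \(F\) only at levels \(\xi>0\) is invisible to \(g\). Phrased back on the seed, with \(B_\beta=\Set{s\in S\mid(\exists\xi<\eta)\,F(s,\xi)\in\Set{\beta}\times A_\beta}\) the family \(\Set{B_\beta\mid\beta\in I}\) consists of non-empty subsets of \(S\), but it overlaps, whereas \(\eqref{prop:ac-lambda;item:splitting}\) literally delivers transversals only for \emph{partitions} of \(S\). The crux is therefore to pass from partition-choice to choice for an overlapping \({\leq}\lambda\)-family of subsets of \(S\). My plan is to handle this by iterating the minimal-block construction: replacing \(g\) by \(g(s)=\min\Set{\beta\in I\mid s\in B_\beta}\) captures every \(\beta\) that occurs as a least index, and since the least index of the residual family is always captured, each \(\beta\) is caught at some stage. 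The delicate part—where I expect the real difficulty, and where the global surjection supplied by \(\SVC(S)\) must be used decisively—is to amalgamate the selections made at the successive stages into one choice function \emph{without} performing a fresh choice at each stage; otherwise one silently reintroduces \(\AC_{\mathrm{ot}(I)}\) and the argument becomes circular.
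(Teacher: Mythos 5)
Your implications (1)\(\Rightarrow\)(2) and (2)\(\Rightarrow\)(3) are correct and coincide with the paper's arguments. The gap is the one you flagged yourself: the proposal never completes (3)\(\Rightarrow\)(1). Your iteration plan founders exactly where you predict: the successive stages (the functions capturing the least remaining indices, and the sets of indices they capture) are definable without choice, but each stage needs its \emph{own} splitting, and selecting one splitting per stage from the non-empty sets of available splittings is itself an instance of well-ordered choice --- the very thing being proved. So as it stands this is a genuine, unfilled gap, not a routine detail to be checked.

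You should know, however, that the obstacle you isolated is real and is not dispatched by the paper either. The paper's proof takes \(\beta_\alpha\), \(S_\alpha=\Set{s\in S\mid f(s,\beta_\alpha)\in X_\alpha}\), \(g(s)=\min\Set{\alpha<\lambda\mid s\in S_\alpha}\), a splitting \(h\) of \(g\), and then asserts that \(\gamma_\alpha=\min\Set{\gamma<\lambda\mid h(\gamma)\in X_\alpha}\) (which must be read as \(h(\gamma)\in S_\alpha\), since \(h(\gamma)\in S\)) is well-defined. For \(\alpha\in g``S\) it is, since \(h(\alpha)\in g^{-1}(\alpha)\subseteq S_\alpha\); but for \(\alpha\notin g``S\) --- your ``invisible'' indices --- it can fail: \(h\) picks one point from each fibre \(g^{-1}(\gamma)=S_\gamma\setminus\bigcup_{\alpha'<\gamma}S_{\alpha'}\), and nothing forces any of those points to lie in \(S_\alpha\). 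For instance, take \(S=\Set{a,b}\), \(\eta=2\), \(X_0=\Set{x,y}\), \(X_1=\Set{z}\), with \(f(a,0)=x\), \(f(b,0)=y\), \(f(a,1)=z\), \(f(b,1)=x\): then \(S_0=\Set{a,b}\), \(\beta_1=1\), \(S_1=\Set{a}\), \(g\) is constantly \(0\), and \(h(0)=b\) is a legitimate splitting that never meets \(S_1\), so \(\gamma_1\) is undefined. (Here the family is finite, so the proposition is not in danger; but the argument is refuted, and the same configuration occurs inside infinite families.) In other words, the printed proof, like yours, only produces choices for \(\alpha\in g``S\).

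Two constructive points. First, the equivalence (1)\(\Leftrightarrow\)(2) is immune to this problem: \(\AC_\lambda(S)\), unlike (3), applies to \emph{overlapping} families of subsets of \(S\), so one may apply it directly to \(\Set{S_\alpha\mid\alpha<\lambda}\) and set \(C(\alpha)=f(c(S_\alpha),\beta_\alpha)\in X_\alpha\); no disjointness and no surjectivity of \(g\) is needed. Second, your diagnosis that the genuine content is the passage from partition-choice (statement (3)) to choice for overlapping families (statement (2)) is exactly right: that step is what neither your proposal nor, as far as I can see, the paper's own proof establishes, and repairing it appears to require a genuinely new idea (or else a counterexample showing (3) is strictly weaker).
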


\begin{proof}
Certainly \(\AC_\lambda\) implies \(\AC_\lambda(S)\). Assuming \(\AC_\lambda(S)\), if \(g\colon S\to\lambda\) then we define
\begin{equation}
Y=\Set*{g^{-1}(\Set{\alpha})\mid\alpha\in g``S}.
\end{equation}
Then if \(c\in\prod Y\), \(h\colon g``S\to S\) given by \(h(\alpha)=c(g^{-1}(\alpha))\) splits \(g\).

Finally, assume that every \(g\colon S\to\lambda\) splits and let \(X=\Set{X_\alpha\mid\alpha<\lambda}\) be a collection of non-empty sets. Let \(f\colon S\times\eta\to\bigcup X\) be a surjection for some \(\eta\in\Ord\). For \(\alpha<\lambda\), let \(\beta_\alpha=\min\Set{\beta<\eta\mid (f``S\times\Set{\beta})\cap X_\alpha\neq\emptyset}\), and let \(S_\alpha=\Set{s\in S\mid f(s,\beta_\alpha)\in X_\alpha}\). Let \(g(s)=\min\Set{\alpha<\lambda\mid s\in S_\alpha}\) for all \(s\in S\). If \(h\colon\lambda\to S\) is a partial function splitting \(g\), then setting \(\gamma_\alpha=\min\Set{\gamma<\lambda\mid h(\gamma)\in S_\alpha}\) is well-defined, and \(C(\alpha)=f(h(\gamma_\alpha),\beta_\alpha)\) is a choice function for \(X\).
\end{proof}

The following corollary was also proved independently by Elliot Glazer (private communication).

\begin{cor}\label{cor:acwo-equiv}
Assume \(\SVC(S)\). The following are equivalent:
\begin{enumerate}
\item \(\AC_\WO\);
\item \(\AC_{{<}\aleph^\ast(S)}(S)\).
\end{enumerate}
\end{cor}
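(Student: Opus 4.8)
The plan is to lean on \cref{prop:reflect-ac-lambda}, which already reduces each instance of \(\AC_\lambda\) to the purely local splitting statement about functions \(S\to\lambda\), and then to observe that such functions can only ever ``see'' an ordinal below \(\aleph^\ast(S)\), so that the bounded hypothesis \(\AC_{{<}\aleph^\ast(S)}(S)\) already suffices to split all of them.

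First the easy direction. Since \(\AC_\WO\) asserts \(\AC_\alpha\) for every ordinal \(\alpha\), and each \(\AC_\alpha\) trivially implies its restriction \(\AC_\alpha(S)\) to families of subsets of \(S\), \(\AC_\WO\) yields \(\AC_\alpha(S)\) for all \(\alpha\), and in particular for all \(\alpha<\aleph^\ast(S)\). This disposes of \((1)\Rightarrow(2)\) with nothing further to check. For \((2)\Rightarrow(1)\) I would fix a cardinal \(\lambda\) and verify \(\AC_\lambda\); since \(\AC_\alpha\) depends on \(\alpha\) only through \(\abs{\alpha}\), covering all cardinals \(\lambda\) is enough to recover \(\AC_\WO\). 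By \cref{prop:reflect-ac-lambda} (the equivalence of items \ref{prop:ac-lambda;item:ac-lambda} and \ref{prop:ac-lambda;item:splitting}) it then suffices to show that every \(g\colon S\to\lambda\) splits.

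The key step, and the only place where any real content enters, is the observation that the image \(g``S\) is a set of ordinals onto which \(S\) surjects, so that \(\abs{g``S}\leq^\ast\abs{S}\) and hence \(\rho\defeq\ot(g``S)<\aleph^\ast(S)\) by the definition of the Lindenbaum number. This is exactly the bound that pulls the problem inside the hypothesis. The fibres \(Y=\Set{g^{-1}(\Set{\alpha})\mid\alpha\in g``S}\) then form a family of non-empty subsets of \(S\) with \(\abs{Y}=\abs{\rho}\), whence \(\abs{Y}\leq^\ast\abs{\rho}\) and \(\AC_\rho(S)\) applies as \(\rho<\aleph^\ast(S)\). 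A choice function \(c\in\prod Y\) yields a splitting of \(g\) via \(h(\alpha)=c(g^{-1}(\Set{\alpha}))\), exactly as in the proof of \cref{prop:reflect-ac-lambda}. Since \(\lambda\) was arbitrary, \(\AC_\lambda\) holds for every cardinal, that is, \(\AC_\WO\).

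I expect the only subtlety to be the order-type computation: one must confirm that a set of ordinals surjected onto by \(S\) has order type strictly below \(\aleph^\ast(S)\), which follows from the monotonicity of \(\leq^\ast\) on ordinals together with the minimality built into the definition of \(\aleph^\ast(S)\). Everything else is a direct appeal to \cref{prop:reflect-ac-lambda}, so this corollary is essentially a bookkeeping consequence of that proposition once the Lindenbaum bound on \(g``S\) is in hand.
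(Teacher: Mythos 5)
Your proposal is correct and takes essentially the same approach as the paper: both directions hinge on the key bound \(\ot(g``S)<\aleph^\ast(S)\) followed by an appeal to \cref{prop:reflect-ac-lambda} to recover \(\AC_\lambda\) for arbitrary cardinals \(\lambda\). The only (immaterial) difference is that the paper composes \(g\) with the order isomorphism \(\iota\colon g``S\to\ot(g``S)\), splits \(\iota\circ g\), and transfers the splitting back via \(\iota^{-1}\), whereas you apply \(\AC_\rho(S)\) to the fibres of \(g\) directly; since the fibres of \(g\) and of \(\iota\circ g\) coincide, these are the same argument.
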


\begin{proof}
Certainly \(\AC_\WO\) implies \(\AC_{{<}\aleph^\ast(S)}(S)\), so assume \(\AC_{{<}\aleph^\ast(S)}(S)\). Let \(g\colon S\to\lambda\), \(A=g``S\), and \(\alpha=\ot(A)<\aleph^\ast(S)\). Taking \(\iota\colon A\to\alpha\) to be the unique isomorphism, we have \(\iota\circ g\colon S\to\alpha\). By \(\AC_{{<}\aleph^\ast(S)}(S)\), \(\iota\circ g\) is split, say by \(f\colon\alpha\to S\). Then \(f\circ\iota^{-1}\) splits \(g\). Since \(g\) was arbitrary, \cref{prop:reflect-ac-lambda} gives us \(\AC_\lambda\). Since \(\lambda\) was arbitrary, we obtain \(\AC_\WO\) as required.
\end{proof}

\begin{cor}\label{cor:linden-not-omega}
Assume \(\SVC(S)\). Then \(\aleph^\ast(S)\neq\aleph_0\).
\end{cor}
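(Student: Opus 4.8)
The plan is to argue by contradiction, leaning on the corollary just proved. Suppose \(\SVC(S)\) holds but \(\aleph^\ast(S)=\aleph_0\). The first thing I would do is rule out the degenerate case that \(S\) is finite: if \(\abs{S}=n<\omega\) then every partial surjection out of \(S\) has range of size at most \(n\), so \(\abs{n+1}\nleq^\ast\abs{S}\) and hence \(\aleph^\ast(S)=n+1<\omega\). Thus the assumption \(\aleph^\ast(S)=\aleph_0\) forces \(S\) to be infinite, and this is the only case that needs real work.

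Next I would observe that, under the assumption \(\aleph^\ast(S)=\aleph_0\), the hypothesis of \cref{cor:acwo-equiv} holds for free. Indeed, \(\AC_{{<}\aleph^\ast(S)}(S)\) is exactly the assertion that \(\AC_\alpha(S)\) holds for every \(\alpha<\omega\); but each such instance is choice for a family of at most finitely many non-empty sets, which is a theorem of \(\ZF\). So \(\AC_{{<}\aleph^\ast(S)}(S)\) holds, and the substantive direction of \cref{cor:acwo-equiv} (which uses \(\SVC(S)\)) yields \(\AC_\WO\), and in particular \(\AC_\omega\).

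Finally I would invoke the standard \(\ZF\) fact that \(\AC_\omega\) implies every infinite set is Dedekind-infinite. Applied to the infinite set \(S\), this produces an injection \(\iota\colon\omega\to S\); defining \(\pi\colon S\to\omega\) by \(\pi(\iota(n))=n\) and \(\pi(s)=0\) for \(s\notin\iota``\omega\) gives a surjection, so \(\abs{\omega}\leq^\ast\abs{S}\). This contradicts \(\aleph^\ast(S)=\aleph_0\), which asserts precisely that \(\abs{\omega}\nleq^\ast\abs{S}\), and the contradiction shows \(\aleph^\ast(S)\neq\aleph_0\).

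The only genuinely non-formal ingredient is the passage from \(\AC_\omega\) to Dedekind-infiniteness of \(S\); everything else is bookkeeping. Accordingly, the main point to get right is the second step, namely that \(\AC_{{<}\aleph^\ast(S)}(S)\) collapses to finite choice once \(\aleph^\ast(S)=\aleph_0\), so that \cref{cor:acwo-equiv} can be applied with no extra hypotheses.
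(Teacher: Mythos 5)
Your proof is correct and is essentially the paper's own argument: the paper likewise observes that \(\aleph^\ast(S)=\aleph_0\) trivializes the local choice condition (every \(g\colon S\to\omega\) has finite image, hence splits by finite choice), invokes \cref{prop:reflect-ac-lambda} to obtain \(\AC_\omega\), and then derives the contradiction from the standard fact that \(\AC_\omega\) implies \(\aleph^\ast(X)\neq\aleph_0\) for all \(X\). Your only deviation---routing through \cref{cor:acwo-equiv} to get all of \(\AC_\WO\), rather than applying \cref{prop:reflect-ac-lambda} directly with \(\lambda=\omega\)---unwinds to exactly the same finite-image/finite-choice observation, since that corollary's proof reduces to it; note also that the paper's ``alternative proof'' (via minitial surjections) is a genuinely different argument, but yours does not resemble that one.
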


\begin{proof}
If \(\aleph^\ast(S)=\aleph_0\) then there are no surjections \(S\to\omega\), so every function \(S\to\omega\) has finite image and hence splits. Therefore, \(\AC_\omega\) holds. However, \(\AC_\omega\) implies that for all \(X\), \(\aleph^\ast(X)\neq\aleph_0\).
\end{proof}

We also have the following more direct (albeit longer) proof of this corollary.

\begin{proof}[Alternative proof]
Assume \(\SVC(S)\), where \(S\) is infinite (the case of \(S\) finite immediately gives \(\aleph^\ast(S)\neq\aleph_0\)), and let \(f\colon S\times\eta\to\Inj{\lomega}{S}\) be a surjection for minimal \(\eta\), where \(\Inj{n}{S}\) is the set of injections \(n\to S\) and \(\Inj{\lomega}{S}=\bigcup_{n<\omega}\Inj{n}{S}\). For \(s\in S\), let
\begin{equation}
X_s=\Set{\alpha<\eta\mid\tup{s,\alpha}\in\dom(f)\land(\forall\beta<\alpha)f(s,\beta)\neq f(s,\alpha)}
\end{equation}
and \(\eta_s=\ot(X_s)\). If \(\Set{\eta_s\mid s\in S}\) is infinite then \(\abs{\omega}\leq^\ast\abs{S}\) as required. Otherwise, \(\Set{\eta_s\mid s\in S}\) is finite. We consider two cases:

\textsl{Case}\enspace{}1: There is \(s\in S\) such that \(\eta_s\geq\omega\).\quad{}Then we can obtain an injection \(g\colon\eta_s\to S^{\underline{\lomega}}\) by setting \(g(\alpha)\) to be the \(\alpha\)\textup{th} element of \(X_s\). Hence \(S\) is Dedekind-infinite, and in particular \(\abs{\omega}\leq^\ast\abs{S}\).

\textsl{Case}\enspace{}2: Otherwise.\quad{}Then \(\eta_s\) is finite for all \(s\), and hence \(\eta\) is finite. We have \(\aleph^\ast(S\times\eta)\geq\aleph^\ast(S^{\underline{\lomega}})\geq\aleph_1\), but by the additivity of Lindenbaum numbers,\footnote{That is, if \(A\) is infinite then for all \(B\), \(\aleph^\ast(A\cup B)=\aleph^\ast(A)+\aleph^\ast(B)\). In particular, if \(n<\omega\) and \(A\) is infinite then \(\aleph^\ast(n\times A)=\aleph^\ast(A)\).} we have \(\aleph^\ast(S\times\eta)=\aleph^\ast(S)\), and hence \(\aleph^\ast(S)\geq\aleph_1\).
\end{proof}

The idea behind \cref{cor:linden-not-omega} extends to certain other cardinals \(\kappa\), though we additionally have to assume \(\AC_{{<}\kappa}\), as (unlike \(\AC_\lomega\)) it is not automatic.

\begin{prop}
Let \(\kappa\) be a limit cardinal or singular. Assume \(\SVC(S)\) and \(\AC_{{<}\kappa}\). Then \(\aleph^\ast(S)\neq\kappa\).
\end{prop}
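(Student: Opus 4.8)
The plan is to suppose \(\aleph^\ast(S)=\kappa\) and derive a contradiction, following the strategy of \cref{cor:linden-not-omega} but using \(\AC_{{<}\kappa}\) exactly where that argument used the automatic finite choice. The first step is to upgrade the hypotheses to full well-ordered choice. For any ordinal \(\lambda\) and any \(g\colon S\to\lambda\), the image \(g``S\) is a set of ordinals with \(\abs{g``S}\leq^\ast\abs{S}\), so its order type lies below \(\aleph^\ast(S)=\kappa\); hence the fibres of \(g\) form a family of fewer than \(\kappa\) non-empty sets, which has a choice function by \(\AC_{{<}\kappa}\). Thus every \(g\colon S\to\lambda\) splits, and \cref{prop:reflect-ac-lambda} gives \(\AC_\lambda\) for every \(\lambda\); that is, \(\AC_\WO\) holds.

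Next I would extract the structural consequences of \(\AC_\WO\). For ordinals, \(\leq\) and \(\leq^\ast\) coincide, since a surjection \(S\twoheadrightarrow\alpha\) yields an injection \(\alpha\hookrightarrow S\) by choosing fibre representatives (\(\AC_\alpha\)); hence \(\aleph(S)=\aleph^\ast(S)=\kappa\), so every \(\lambda<\kappa\) injects into \(S\) while \(\kappa\) does not. A short computation with \(\AC_{{<}\nu^+}\) also shows that every successor cardinal \(\nu^+\) is regular, so if \(\kappa\) is singular it is in fact a limit cardinal. In every case the problem therefore reduces to \(\kappa\) being a limit cardinal, and the goal becomes to produce an injection \(\kappa\to S\), contradicting \(\aleph(S)=\kappa\).

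The construction is a disjointification. Let \(\mu=\cf(\kappa)\) and fix an increasing sequence \(\langle\kappa_i:i<\mu\rangle\) of regular cardinals cofinal in \(\kappa\) with \(\kappa_0>\mu\); such a sequence exists precisely because \(\kappa\) is a limit cardinal, so the (regular) successor cardinals below it are cofinal. Using \(\AC_\mu\) choose injections \(\iota_i\colon\kappa_i\to S\), and set \(B_i=\iota_i``\kappa_i\setminus\bigcup_{j<i}\iota_j``\kappa_j\). The removed part has size at most \(\sum_{j<i}\kappa_j<\kappa_i\), so by regularity of \(\kappa_i\) we get \(\abs{B_i}=\kappa_i\); the \(B_i\) are pairwise disjoint, whence \(\bigsqcup_{i<\mu}B_i\) is a well-orderable subset of \(S\) of cardinality \(\sum_{i<\mu}\kappa_i=\kappa\). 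This gives \(\kappa\leq^\ast S\), the desired contradiction.

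The main obstacle is this disjointification, and specifically the requirement that the cofinal cardinals \(\kappa_i\) be regular: regularity is exactly what keeps \(\abs{B_i}=\kappa_i\) after deleting the earlier (fewer than \(\kappa_i\)) images, and a cofinal sequence of regular cardinals is available only when \(\kappa\) is a limit cardinal. This is why the reduction ``singular \(\Rightarrow\) limit cardinal'' is indispensable—the construction simply cannot reach a singular \emph{successor}—and it is also where \(\SVC(S)\) does its real work: when \(\kappa\) is weakly inaccessible the sequence has length \(\mu=\kappa\), so choosing the \(\iota_i\) requires \(\AC_\kappa\), which is not among the hypotheses and is supplied only through the passage to \(\AC_\WO\) via \cref{prop:reflect-ac-lambda}.
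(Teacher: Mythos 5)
Your overall strategy is sound and genuinely more self-contained than the paper's. The paper's proof is two lines: from \(\aleph^\ast(S)=\kappa\) the hypothesis \(\AC_{{<}\kappa}\) gives \(\AC_{{<}\aleph^\ast(S)}(S)\), hence \(\AC_\WO\) by \cref{cor:acwo-equiv}, and then it simply cites \cite{ryan_hartogs_2024} for the fact that \(\AC_\WO\) is equivalent to ``\(\aleph^\ast(X)\) is a regular successor for every \(X\)'', contradicting \(\kappa\) being a limit or singular. Your first step is the same derivation of \(\AC_\WO\), done inline through \cref{prop:reflect-ac-lambda} rather than by invoking \cref{cor:acwo-equiv}; where you differ is that you then prove the needed half of the cited result from scratch: \(\AC_\WO\) gives \(\aleph(S)=\aleph^\ast(S)\), successor cardinals are regular (so the singular case reduces to the limit-cardinal case), and the limit-cardinal case is killed by a disjointification producing a well-orderable subset of \(S\) of size \(\kappa\). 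Your route buys independence from the external reference at the cost of length; the paper buys brevity at the cost of a citation.

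However, the disjointification as written fails in exactly the case you single out as delicate. You require an increasing sequence \(\tup{\kappa_i : i<\mu}\) of regular cardinals cofinal in \(\kappa\) with \(\kappa_0>\mu\), where \(\mu=\cf(\kappa)\). If \(\kappa\) is weakly inaccessible then \(\mu=\kappa\) while every \(\kappa_i<\kappa\), so no such sequence exists and the construction cannot begin; your closing paragraph discusses the weakly inaccessible case as though it were covered, which suggests the clash went unnoticed. The repair is easy because \(\kappa_0>\mu\) is stronger than what the argument uses: the estimate \(\sum_{j<i}\kappa_j<\kappa_i\) only needs \(\abs{i}<\kappa_i\), since then regularity of \(\kappa_i\) forces \(\sup_{j<i}\kappa_j<\kappa_i\) and \(\sum_{j<i}\kappa_j\leq\max(\abs{i},\sup_{j<i}\kappa_j)<\kappa_i\). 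And \(\kappa_i>\abs{i}\) is attainable for every limit cardinal \(\kappa\): fix any cofinal \(\tup{\alpha_i : i<\mu}\) and recursively set \(\kappa_i=\bigl(\max(\alpha_i,\abs{i},\sup_{j<i}\kappa_j)\bigr)^+\), which is regular by your own step-two observation and stays below \(\kappa\) because \(i<\cf(\kappa)\) and \(\kappa\) is a limit cardinal. With that modification your proof goes through, including the weakly inaccessible case, where (as you correctly note) choosing the \(\iota_i\) uses \(\AC_\kappa\), available because you have all of \(\AC_\WO\); note also that your conclusion is in fact the stronger \(\abs{\kappa}\leq\abs{S}\), an injection, though either relation yields the contradiction.
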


\begin{proof}
If \(\aleph^\ast(S)=\kappa\) then, by \cref{cor:acwo-equiv}, \(\AC_\WO\) holds. However, by \cite[Theorem~3.4]{ryan_hartogs_2024}, \(\AC_\WO\) is equivalent to ``for all \(X\), \(\aleph^\ast(X)\) is a regular successor'', contradicting that \(\aleph^\ast(S)=\kappa\) is singular or a limit.
\end{proof}

In fact, the method of \cref{prop:reflect-ac-lambda} applies more generally.

\begin{prop}\label{prop:ac-x}
Assume \(\SVC(S)\). Then for all \(X\), the following are equivalent:
\begin{enumerate}
\item \(\AC_X\);
\item \(\AC_X(S)\).
\end{enumerate}
\end{prop}

\begin{proof}
Certainly \(\AC_X\) implies \(\AC_X(S)\). so assume \(\AC_X(S)\) and let \(p\colon X\to Y\) be a surjection. Let \(f\colon S\times\eta\to \bigcup Y\) be a surjection. For \(x\in X\), let
\begin{align}
\beta_x&=\min\Set*{\beta<\eta\mid (f``S\times\Set{\beta})\cap p(x)\neq\emptyset},\text{ and}\\
S_x&=\Set*{s\in S\mid f(s,\beta_x)\in p(x)}.
\end{align}
By \(\AC_X(S)\), we have \(c\in\prod\Set{S_x\mid x\in X}\), giving \(g(p(x))\defeq f(c(x),\beta_x)\in\prod Y\).
\end{proof}

\subsection{The countable union theorem}

For a set \(X\), we write \(\UT(X)\) to mean ``a countable union of countable subsets of \(X\) is countable'', and \(\UT\) to mean the countable union theorem \((\forall X)\UT(X)\).

\begin{prop}\label{prop:ut}
Assume \(\SVC^+(S)\) and \(\cf(\omega_1)=\omega_1\). The following are equivalent:
\begin{enumerate}
\item \(\UT\);
\item \(\UT(S)\).
\end{enumerate}
\end{prop}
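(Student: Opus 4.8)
The implication \((1)\Rightarrow(2)\) is immediate, since \(\UT\) abbreviates \((\forall X)\UT(X)\). For the converse the plan is to transfer an arbitrary instance of the countable union theorem down to the seed. So fix a set \(X\) together with a countable family \(\Set{A_n\mid n<\omega}\) of countable subsets of \(X\); I must show \(\bigcup_{n<\omega}A_n\) is countable. Using \(\SVC^+(S)\), fix an injection \(\iota\colon X\to S\times\eta\) for some ordinal \(\eta\). Because \(\iota\) is injective it restricts to a bijection of any subset of \(X\) onto its image, so countability passes freely between subsets of \(X\) and their \(\iota\)-images; writing \(B_n=\iota``A_n\) and \(B=\iota``\bigl(\bigcup_n A_n\bigr)=\bigcup_n B_n\), it therefore suffices to prove that \(B\subseteq S\times\eta\) is countable, each \(B_n\) being a countable subset of \(S\times\eta\).

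The next step is to split \(B\) along the two coordinate projections \(\pi_0\colon S\times\eta\to S\) and \(\pi_1\colon S\times\eta\to\eta\). In \(\ZF\) the image of a countable set under any function is countable (well-order the domain and send each image point to its least preimage), so every \(\pi_0``B_n\) is a countable subset of \(S\) and every \(\pi_1``B_n\) is a countable subset of \(\eta\). Consequently \(\pi_0``B=\bigcup_n\pi_0``B_n\) is a countable union of countable subsets of \(S\), hence countable by the hypothesis \(\UT(S)\); and \(\pi_1``B=\bigcup_n\pi_1``B_n\) is a countable union of countable subsets of the ordinal \(\eta\). Granting that this last union is countable, the proof closes: \(B\subseteq\pi_0``B\times\pi_1``B\), which is a product of two countable (in particular well-orderable) sets and so injects into \(\omega\times\omega\), whence \(B\) and therefore \(\bigcup_n A_n\) are countable.

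The main obstacle, and the sole use of \(\cf(\omega_1)=\omega_1\), is the auxiliary claim that in \(\ZF\) under this hypothesis every countable union \(E=\bigcup_n C_n\) of countable subsets \(C_n\) of an ordinal is countable. The key is that the ambient well-order furnishes canonical data with no appeal to choice: each \(C_n\) carries its increasing enumeration of order type \(\ot(C_n)<\omega_1\), so \(n\mapsto\ot(C_n)\) is a genuine function \(\omega\to\omega_1\) whose range, being countable, is bounded below \(\omega_1\) by the regularity of \(\omega_1\); let \(\beta=\sup_n\ot(C_n)<\omega_1\). The assignment \(x\mapsto\tup{n_x,k_x}\), where \(n_x=\min\Set{n\mid x\in C_n}\) and \(k_x<\ot(C_{n_x})\leq\beta\) records the position of \(x\) in the increasing enumeration of \(C_{n_x}\), is then an injection of \(E\) into the countable set \(\omega\times\beta\). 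This is exactly the step that fails when \(\omega_1\) is singular — in the Feferman--Lévy model \(\UT(\omega_1)\) is false — so the regularity hypothesis is genuinely needed rather than an artefact of the argument. With this lemma established the reduction above yields \(\UT\).
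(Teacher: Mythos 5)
Your proof is correct, and at the top level it follows the same strategy as the paper's: use \(\SVC^+(S)\) to push the countable union into \(S\times\eta\), handle the \(S\)-coordinate with \(\UT(S)\), and handle the ordinal coordinate with \(\cf(\omega_1)=\omega_1\). The implementations differ, though, in a way worth recording. The paper takes \(A=\bigcup_{n<\omega}A_n\subseteq S\times\eta\) to be \emph{minitial}, so that each fibre \(A\cap(\Set{s}\times\eta)\) is an initial segment \(\Set{s}\times\eta_s\); regularity of \(\omega_1\) gives \(\eta_s<\omega_1\), \(\UT(S)\) makes the set \(B\) of occupied first coordinates countable, and the minimality clause of minitiality (together with regularity once more) is what bounds \(\eta\leq\sup_{s\in B}\eta_s<\omega_1\), so that \(A\subseteq B\times\eta\) is a subset of a product of two countable sets. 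You dispense with minitiality altogether: any injection into \(S\times\eta\) will do, you project onto both coordinates, apply \(\UT(S)\) to \(\pi_0``B\), and handle \(\pi_1``B\) by proving outright the auxiliary \(\ZF\) lemma that, when \(\omega_1\) is regular, a countable union of countable sets of ordinals is countable, finishing via \(B\subseteq\pi_0``B\times\pi_1``B\). What your route buys is self-containment: your auxiliary lemma is exactly the fact the paper uses implicitly in the step ``\(\eta_s\) is a countable union of countable sets, and hence \(\eta_s<\omega_1\)'', and you need none of the apparatus of \cref{sec:preliminaries;ssec:minitial}; what the paper's route buys is economy, since the minitial structure packages both coordinate bounds into the single conclusion \(\abs{A}\leq\abs{B\times\eta}\leq\aleph_0\). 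Your closing remark on the Feferman--L\'evy model correctly locates why the regularity hypothesis is genuinely needed, which is precisely the content of the paper's \cref{qn:ut}.
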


\begin{proof}
Certainly \(\UT\) implies \(\UT(S)\), so assume \(\UT(S)\). Let \(\Set{A_n\mid n<\omega}\) be a countable family of countable sets, and let \(A=\bigcup_{n<\omega}A_n\), assuming without loss of generality that \(A\subseteq S\times\eta\) for minimal \(\eta\).

For \(n<\omega\), let \(z_n=\Set{\alpha<\eta\mid(\exists s\in S)\tup{s,\alpha}\in A_n}\), so \(\abs{z_n}\leq^\ast\abs{A_n}\leq\abs{\omega}\), and so \(\abs{z_n}\leq\abs{\omega}\). By minimality of \(\eta\) and \(\cf(\omega_1)=\omega_1\), we have \(\eta=\ot(\bigcup_{n<\omega}z_n)<\omega_1\). For \(n<\omega\), let \(B_n=\Set{s\in S\mid(\exists\alpha<\eta)\tup{s,\alpha}\in A_n}\), so \(B_n\subseteq S\) is countable for all \(n\). By \(\UT(S)\), \(B=\bigcup_{n<\omega}B_n\) is countable, and hence \(\abs{A}\leq\abs{B\times\eta}\leq\aleph_0\) as required.
\end{proof}

\begin{qn}\label{qn:ut}
Can \cref{prop:ut} be improved to ``\(\UT\) is equivalent to \(\UT(S)\)'' without assuming that \(\omega_1\) is regular? Since the singularity of \(\omega_1\) is already a violation of \(\UT\), this is equivalent to ``does \(\SVC^+(S)\) and \(\cf(\omega_1)=\omega\) imply \(\lnot\UT(S)\)?''.
\end{qn}

\subsection{The axiom of choice}

The following was remarked by Blass in \cite{blass_injectivity_1979}.

\begin{prop}[Blass]\label{prop:ac}
Assume \(\SVC(S)\). The following are equivalent:
\begin{enumerate}
\item\label{prop:ac;item:ac} \(\AC\);
\item\label{prop:ac;item:ac-s} \(S\) can be well-ordered.
\end{enumerate}
\end{prop}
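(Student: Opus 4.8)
Assume $\SVC(S)$. Then $\AC$ holds if and only if $S$ can be well-ordered.

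\begin{proof}[Proof proposal]
The plan is to prove the two implications separately, with the forward direction being immediate and the reverse direction being the substantive content.

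First I would dispatch the direction $\eqref{prop:ac;item:ac}\implies\eqref{prop:ac;item:ac-s}$: if $\AC$ holds then every set can be well-ordered, and in particular so can $S$. This requires no appeal to $\SVC(S)$ at all.

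For the reverse direction $\eqref{prop:ac;item:ac-s}\implies\eqref{prop:ac;item:ac}$, the strategy is to show that under $\SVC(S)$, well-orderability of the seed propagates to every set. Suppose $S$ can be well-ordered, say $\abs{S}=\kappa$ for some cardinal $\kappa$, and let $X$ be an arbitrary non-empty set. By $\SVC(S)$ there is an ordinal $\eta$ and a surjection $f\colon S\times\eta\to X$. The key observation is that $S\times\eta$ is now well-orderable: fixing a well-ordering of $S$ of type $\kappa$ induces a well-ordering of $S\times\eta$ (of type at most $\kappa\cdot\eta$, say via the lexicographic or G\"odel-style pairing order). A well-orderable set surjects onto $X$, and in $\ZF$ this suffices to well-order $X$ itself: one defines a well-ordering on $X$ by mapping each $x\in X$ to the $<$-least element of its preimage $f^{-1}(\Set{x})\subseteq S\times\eta$, which gives an injection $X\to S\times\eta$ into a well-ordered set. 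Pulling back the well-ordering along this injection well-orders $X$. Since $X$ was arbitrary, every set can be well-ordered, which is the well-ordering principle $\WO$, and $\WO$ is equivalent to $\AC$ in $\ZF$.

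The main obstacle, such as it is, lies in the passage from ``$S\times\eta$ is well-orderable and surjects onto $X$'' to ``$X$ is well-orderable'', since extracting the least preimage of each point uses precisely the fact that a well-ordered domain has canonical choices available. This is the standard $\ZF$ fact that the surjective image of a well-orderable set under a function from a well-orderable set is well-orderable, and it is exactly the mechanism by which $\SVC$ collapses the well-orderability of the single seed $S$ into global choice. No genuine difficulty arises beyond assembling these pieces; the force of the statement is entirely in recognising that the seed $S$ is the sole obstruction to well-ordering everything in sight.
\end{proof}
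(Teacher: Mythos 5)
Your proposal is correct and takes essentially the same route as the paper: the forward direction is immediate, and for the reverse direction the paper likewise notes that once \(S\) (hence \(S\times\eta\)) is well-orderable, the relation \(\abs{X}\leq^\ast\abs{S\times\eta}\) given by \(\SVC(S)\) upgrades to \(\abs{X}\leq\abs{S\times\eta}\) via least preimages, so \(X\) is well-orderable. Your write-up simply spells out in detail the least-preimage argument that the paper leaves implicit.
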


\begin{proof}
Certainly \(\AC\) implies that \(S\) can be well-ordered. On the other hand, if \(S\) can be well-ordered and \(\abs{X}\leq^\ast\abs{S\times\eta}\) then \(\abs{X}\leq\abs{S\times\eta}\), so \(X\) can be well-ordered.
\end{proof}

\subsection{Comparability}

\(\AW_X\) is the statement ``for all \(Y\), \(\abs{X}\leq\abs{Y}\) or \(\abs{Y}\leq\abs{X}\)'' and \(\AW_X^\ast\) is the statement ``for all \(Y\), \(\abs{X}\leq^\ast\abs{Y}\) or \(\abs{Y}\leq^\ast\abs{X}\)''. We write \(\AW_X^{(\ast)}(B)\) to mean ``for all \(A\subseteq B\), \(\abs{X}\leq^{(\ast)}\abs{A}\) or \(\abs{A}\leq^{(\ast)}\abs{X}\)''. Note that ``every infinite set is Dedekind-infinite'' is equivalent to \(\AW_{\aleph_0}\).

\begin{prop}\label{prop:aw+}
Assume \(\SVC^+(S)\). For each infinite cardinal \(\lambda\), the following are equivalent:
\begin{enumerate}
\item\label{prop:aw+;item:aw} \(\AW_\lambda\);
\item\label{prop:aw+;item:aw-s} \(\AW_\lambda(S)\).
\end{enumerate}
\end{prop}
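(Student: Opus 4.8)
The plan is to prove the non-trivial direction; the implication from \(\AW_\lambda\) to \(\AW_\lambda(S)\) is immediate, since \(\AW_\lambda(S)\) only demands comparability of \(\lambda\) against subsets \(A\subseteq S\), which is a special case of \(\AW_\lambda\). So I would assume \(\SVC^+(S)\) and \(\AW_\lambda(S)\) and aim to compare an arbitrary set \(Y\) with \(\lambda\). First I would invoke \(\SVC^+(S)\) to fix an ordinal \(\eta\) and a minitial embedding of \(Y\) into \(S\times\eta\), so that without loss of generality \(Y\subseteq S\times\eta\) is minitial: for each \(s\in S\) there is \(\eta_s\leq\eta\) with \(Y\cap(\Set{s}\times\eta)=\Set{s}\times\eta_s\). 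Thus \(Y=\bigcup_{s\in S}\Set{s}\times\eta_s\) is a disjoint union of initial-segment columns, and the entire argument rests on this column structure.

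The argument then splits on the heights of the columns, a dichotomy requiring no choice since it is mere ordinal comparison. If there is some \(s\) with \(\eta_s\geq\lambda\), then \(\lambda\) injects into the column \(\Set{s}\times\eta_s\subseteq Y\), giving \(\abs{\lambda}\leq\abs{Y}\). Otherwise \(\eta_s<\lambda\) for every \(s\). In this case let \(B=\Set{s\in S\mid\eta_s\neq 0}\) be the projection of \(Y\) onto \(S\); this is a subset of \(S\), so I may apply \(\AW_\lambda(S)\) to it. If \(\abs{\lambda}\leq\abs{B}\), then since \(s\mapsto\tup{s,0}\) injects \(B\) into \(Y\), I obtain \(\abs{\lambda}\leq\abs{B}\leq\abs{Y}\). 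If instead \(\abs{B}\leq\abs{\lambda}\), then from \(\eta_s<\lambda\) (so \(\eta_s\subseteq\lambda\)) I have \(Y\subseteq B\times\lambda\), and composing a fixed injection \(B\to\lambda\) with the canonical bijection \(\lambda\times\lambda\cong\lambda\) (valid in \(\ZF\) as \(\lambda\) is a well-ordered cardinal) yields \(\abs{Y}\leq\abs{B\times\lambda}\leq\abs{\lambda}\). In every case \(Y\) is comparable with \(\lambda\), so \(\AW_\lambda\) holds.

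I do not expect a serious obstacle here: the content lies in recognising that the two natural parameters of a minitial set — the heights of its columns and the size of its support \(B\subseteq S\) — are precisely the quantities controlled by column-wise ordinal comparison and by \(\AW_\lambda(S)\) respectively, and that these two dichotomies together exhaust all possibilities. The one point demanding care is keeping every injection choice-free: the map \(B\hookrightarrow Y\) must be the canonical ``bottom of each column'' map \(s\mapsto\tup{s,0}\), and the collapse of \(Y\) into \(\lambda\) must be routed through \(\lambda\times\lambda=\lambda\) rather than through any well-ordering of \(Y\) itself. This is also where I would highlight that the hypothesis is \(\SVC^+(S)\) (injections) rather than \(\SVC(S)\) (surjections), matching the injective comparability built into \(\AW_\lambda\); the corresponding surjective statement \(\AW_\lambda^\ast\) would instead be handled with a minitial surjection in the next proposition.
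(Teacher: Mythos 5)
Your proposal is correct and takes essentially the same approach as the paper: embed \(Y\) minitially into \(S\times\eta\), consider the support \(B=\Set{s\in S\mid\eta_s\neq0}\) (the paper's \(A=\Set{s\in S\mid\tup{s,0}\in X}\)), and apply \(\AW_\lambda(S)\) to it, with the case \(\abs{\lambda}\leq\abs{B}\) handled identically via \(s\mapsto\tup{s,0}\). The only difference lies in the other case: the paper simply notes that \(\abs{A}\leq\abs{\lambda}\) makes \(X\) well-orderable and hence comparable with \(\lambda\), whereas you add a preliminary split on column heights to produce an explicit injection of \(Y\) into \(\lambda\) via \(\lambda\times\lambda=\lambda\) --- slightly longer, but equally valid (modulo the trivial case of finite \(\lambda\)).
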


\begin{proof}
Certainly \(\AW_\lambda\) implies \(\AW_\lambda(S)\), so assume \(\AW_\lambda(S)\). Let \(X\subseteq S\times\eta\), and let \(A=\Set{s\in S\mid(\exists\alpha<\eta)\tup{s,\alpha}\in X}\). Note that \(\abs{A}\leq\abs{X}\), since \(s\mapsto\tup{s,\alpha_s}\) is an injection, where \(\alpha_s\) is least such that \(\tup{s,\alpha_s}\in X\). If \(\abs{A}\leq\abs{\lambda}\) then \(\abs{X}\leq\abs{A\times\eta}\) is well-orderable, so certainly \(\abs{\lambda}\leq\abs{X}\) or \(\abs{X}\leq\abs{\lambda}\). On the other hand, if \(\abs{A}\nleq\abs{\lambda}\) then \(\abs{\lambda}\leq\abs{A}\leq\abs{X}\) as required.
\end{proof}

Replacing \(\leq\) by \(\leq^\ast\) in the proof of \cref{prop:aw+}, we obtain \cref{prop:aw+-ast}.

\begin{prop}\label{prop:aw+-ast}
Assume \(\SVC^+(S)\). For each infinite cardinal \(\lambda\), the following are equivalent:
\begin{enumerate}
\item\label{prop:aw-ast;item:aw-ast} \(\AW^\ast_\lambda\);
\item\label{prop:aw-ast;item:aw-ast-s} \(\AW^\ast_\lambda(S)\).
\end{enumerate}
\end{prop}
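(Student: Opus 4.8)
The plan is to run the proof of \cref{prop:aw+} essentially verbatim, replacing $\leq$ by $\leq^\ast$ throughout and inserting a single extra observation to cope with the fact that a partial surjection need not have a one-sided inverse. The forward direction is immediate: $\AW^\ast_\lambda$ applied only to subsets of $S$ is precisely $\AW^\ast_\lambda(S)$.

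For the converse I assume $\AW^\ast_\lambda(S)$ and take an arbitrary $Y$. Using $\SVC^+(S)$ I fix $\eta$ least with $\abs{Y}\leq\abs{S\times\eta}$ and pass to a minitial embedding; since $\leq^\ast$-comparability with $\lambda$ is invariant under bijection, it suffices to establish comparability for a minitial $X\subseteq S\times\eta$. I then set $A=\Set{s\in S\mid\tup{s,0}\in X}$. By minitiality the $s$-fibre of $X$ equals $\{s\}\times\eta_s$ for some $\eta_s\leq\eta$, so $s\in A$ exactly when $\eta_s\geq 1$, and in particular $X\subseteq A\times\eta$. Applying $\AW^\ast_\lambda(S)$ to the subset $A\subseteq S$ yields two cases.

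In the case $\abs{A}\leq^\ast\abs{\lambda}$ I exploit that $\lambda$ is an ordinal: a partial surjection $\lambda\to A$ is inverted by sending each $a\in A$ to the least element of its preimage, so $\abs{A}\leq^\ast\abs{\lambda}$ upgrades to $\abs{A}\leq\abs{\lambda}$ and $A$ is well-orderable. Then $X\subseteq A\times\eta$ is well-orderable, hence $\leq^\ast$-comparable (indeed $\leq$-comparable) with $\lambda$. In the case $\abs{\lambda}\leq^\ast\abs{A}$, the injection $s\mapsto\tup{s,0}$ of $A$ into $X$ gives $\abs{A}\leq^\ast\abs{X}$, and composing the two partial surjections yields $\abs{\lambda}\leq^\ast\abs{X}$, as required.

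The only step that is not a direct transcription of \cref{prop:aw+} is the upgrade $\abs{A}\leq^\ast\abs{\lambda}\Rightarrow\abs{A}\leq\abs{\lambda}$ for the ordinal $\lambda$; this is where the asymmetry between $\leq$ and $\leq^\ast$ is absorbed, and I expect it to be the only point requiring genuine attention. The remaining manipulations — transitivity of $\leq^\ast$ under composition and the minitiality bookkeeping — are identical to the injective case, so the main obstacle here is notational rather than conceptual.
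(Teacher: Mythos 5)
Your proof is correct and is essentially the paper's own: the paper proves this proposition by saying ``replace \(\leq\) by \(\leq^\ast\) in the proof of \cref{prop:aw+}'', which is precisely what you carry out. Your one added observation --- that \(\abs{A}\leq^\ast\abs{\lambda}\) upgrades to \(\abs{A}\leq\abs{\lambda}\) because \(\lambda\) is an ordinal --- is exactly the point that makes the literal substitution legitimate, so you have correctly identified and filled the only detail the paper leaves implicit.
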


\begin{qn}
As a consequence of \cref{prop:aw+,prop:aw+-ast}, assuming \(\SVC(S)\) (and hence \(\SVC^+(\power(S))\)), \(\AW_\lambda(\power(S))\) implies \(\AW_\lambda\), and \(\AW_\lambda^\ast(\power(S))\) implies \(\AW_\lambda^\ast\). Under the assumption of \(\SVC(S)\), can we obtain a `better' set \(X\) such that \(\AW_\lambda(X)\) implies \(\AW_\lambda\)? What about the \(\AW_\lambda^\ast\) case?
\end{qn}

\subsection{Boolean prime ideal theorem}

The Boolean prime ideal theorem \(\BPI\) is the statement ``every Boolean algebra has a prime ideal'', though it has many equivalent forms (see \cite[Form~14]{howard_consequences_1998}). In \cite{blass_injectivity_1979}, Blass presents the following local reflection of \(\BPI\) under the assumption of \(\SVC\), attributing the idea behind the proof to Pincus.

\begin{prop}[Pincus--Blass, \cite{blass_injectivity_1979}]
Assume \(\SVC(S)\). The following are equivalent:
\begin{enumerate}
\item \(\BPI\);
\item There is a fine ultrafilter on \([S]^\lomega\). That is, an ultrafilter \(\calU\) on \([S]^\lomega\) such that, for all \(s\in S\), \(\Set{a\in [S]^\lomega\mid s\in a}\in\calU\).
\end{enumerate}
\end{prop}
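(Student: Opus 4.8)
The plan is to prove the two directions separately. The forward direction, that $\BPI$ implies the existence of a fine ultrafilter on $[S]^\lomega$, should be routine: $\BPI$ is equivalent to the ultrafilter lemma, so it suffices to exhibit a filter on $[S]^\lomega$ with the finite intersection property whose generated filter is proper, and then extend it to an ultrafilter. The natural candidate is the filter generated by the sets $F_s=\Set{a\in[S]^\lomega\mid s\in a}$ for $s\in S$. Any finite subcollection $F_{s_1},\dots,F_{s_n}$ has nonempty intersection, since $\Set{s_1,\dots,s_n}\in[S]^\lomega$ lies in all of them; hence the finite intersection property holds and the generated filter is proper. Extending it to an ultrafilter $\calU$ by the ultrafilter lemma yields a fine ultrafilter, since each $F_s\in\calU$ by construction.

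The reverse direction is the substantive one, and here I would lean on an equivalent of $\BPI$ that meshes well with $\SVC$. I would use the characterisation of $\BPI$ as a compactness/consistency principle — for instance, that every family of finite-character or the consistency form $\BPI$ is equivalent to: every filter on a set extends to an ultrafilter, \emph{and} the ``$\BPI$ for well-orderable families'' holds automatically in $\ZF$. Concretely, given $\SVC(S)$ and a fine ultrafilter $\calU$ on $[S]^\lomega$, the aim is to build, from an arbitrary Boolean algebra $\mathbb{B}$, a prime ideal. Fix a surjection $f\colon S\times\eta\to\mathbb{B}$ supplied by $\SVC(S)$. The idea is that the ultrafilter $\calU$ lets one make the ``$S$-indexed'' choices needed, while the ordinal component $\eta$ is handled by the well-ordered choice that is a $\ZF$-theorem for compactness over well-orderable index sets.

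The key mechanism I expect to use is the standard reduction of $\BPI$ to a compactness statement for propositional logic: $\BPI$ is equivalent to the assertion that every finitely satisfiable set of propositional formulas (over an arbitrary set of variables) is satisfiable. Given such a theory $\Sigma$ over a variable set $V$, use $\SVC(S)$ to write $V$ as a surjective image of $S\times\eta$. For each $s\in S$ and each ``relevant finite portion'' of the variables indexed through $s$, finite satisfiability provides a finite partial truth assignment; the fineness of $\calU$ is exactly what allows one to glue these $S$-local partial assignments coherently into a single assignment on all of $V$, averaging over $[S]^\lomega$ via $\calU$. The $\eta$-coordinate is resolved by transfinite recursion using the well-ordering of $\eta$, which requires no choice. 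The upshot is a total assignment satisfying $\Sigma$, giving compactness and hence $\BPI$.

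The hard part will be the gluing step: making precise how a fine ultrafilter on $[S]^\lomega$ converts the $S$-indexed family of finite consistent partial assignments into a single global assignment, and verifying that the result is genuinely consistent rather than merely finitely consistent. The delicate point is that consistency of the glued assignment on any finite set of variables must be traced back, through the surjection $f$ and the ultrafilter, to finite satisfiability of $\Sigma$; this is where fineness (the requirement $\Set{a\mid s\in a}\in\calU$ for each $s$) is genuinely needed, ensuring every individual seed element is eventually accounted for. I would expect the bookkeeping coordinating the ultrafilter limit in the $S$-direction with the recursion in the $\eta$-direction to be the most technical portion, and I would isolate it as a lemma stating that, under $\SVC(S)$, a fine ultrafilter on $[S]^\lomega$ suffices to prove propositional compactness.
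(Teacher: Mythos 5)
Your forward direction is correct and is the easy half: the sets \(F_s=\Set{a\in[S]^\lomega\mid s\in a}\) have the finite intersection property (witnessed by \(\Set{s_1,\dots,s_n}\) itself), so the ultrafilter lemma, which is equivalent to \(\BPI\), extends the generated filter to a fine ultrafilter. Note, for calibration, that the paper itself gives no proof of this proposition at all; it is quoted from Blass's paper with the proof attributed to Pincus, so the standard of comparison is that argument, whose entire content is the direction you sketch second.

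In that reverse direction your proposal has a genuine gap, and it sits exactly at the step you defer as ``the gluing lemma''. The ultrafilter-limit method requires an actual \emph{family} of truth assignments indexed by \([S]^\lomega\): for each finite \(a\subseteq S\) you must produce \emph{one} assignment satisfying the part of the theory whose variables come from \(f``(a\times\eta)\), and only then can you define the limit assignment \(v\mapsto 1\) iff \(\Set{a\mid h_a(v)=1}\in\calU\). The obstruction is that no such family is available without choice. What \(\ZF\) gives you is compactness for \emph{well-orderable} variable sets, so each restricted theory is satisfiable; but the set of its satisfying assignments has no canonical element. A canonical assignment can be extracted from a well-ordering of \(a\times\eta\), but such a well-ordering requires an enumeration of \(a\), of which there are \(|a|!\) with none distinguished; so you are left needing a choice function on an \([S]^\lomega\)-indexed family of finite sets, which is not a \(\ZF\) theorem and is not supplied by \(\calU\). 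Your proposed fixes do not close this: ``averaging over \([S]^\lomega\) via \(\calU\)'' (majority voting among the finitely many candidate assignments per \(a\)) does not preserve consistency --- a majority of satisfying assignments need not be satisfying --- and ``transfinite recursion on the \(\eta\)-coordinate'' only resolves the pieces \(\Set{s}\times\eta\) for a \emph{single} \(s\); the whole difficulty is coordinating assignments across the several elements of \(a\) when \(|a|\geq 2\), and tie-breaking rules (e.g.\ always prefer \(0\)) produce limits that fail to cohere on pairs. Fineness guarantees that every finite set of variables is eventually covered, but it cannot manufacture the indexed family of local solutions; overcoming precisely this is the non-trivial idea in Pincus's argument, and it is absent from the proposal.
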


\subsection{Kinna--Wagner principles}

For a set \(X\), we define the iterated power set function \(\power^\alpha(X)\) by \(\power^\alpha(X)=\bigcup_{\beta<\alpha}\power^\beta(X)\) when \(\alpha\) is a limit ordinal, and in the successor case \(\power^{\alpha+1}(X)=\power(\power^\alpha(X))\). We also extend this notation to \(\Ord\), so \(\power(\Ord)\) is the class of all \emph{sets} of ordinals, \(\power^2(\Ord)\) is the class of all sets of sets of ordinals, et cetera. For an ordinal \(\alpha\), \(\KWP_\alpha\) means ``for all \(X\) there is an ordinal \(\eta\) such that \(\abs{X}\leq\abs{\power^\alpha(\eta)}\)'', and \(\KWP_\alpha^\ast\) means ``for all \(X\), there is an ordinal \(\eta\) such that \(\abs{X}\leq^\ast\abs{\power^\alpha(\eta)}\)''. The following observations, from \cite{karagila_intermediate_2024}, are consequences of the fact that, for all \(\alpha\), there is a definable surjection from \(\power^\alpha(\Ord)\) onto \(\power^\alpha(\Ord)\times\Ord\), and that \(\Ord\subseteq\power^\alpha(\Ord)\).

\begin{prop}[Karagila--Schilhan, \cite{karagila_intermediate_2024}]\label{prop:kwp-ast}
Assume \(\SVC(S)\). The following are equivalent:
\begin{enumerate}
\item \(\KWP_\alpha^\ast\);
\item There is \(\eta\in\Ord\) such that \(\abs{S}\leq^\ast\abs{\power^\alpha(\eta)}\).
\end{enumerate}
\end{prop}

\begin{prop}[Karagila--Schilhan, \cite{karagila_intermediate_2024}]\label{prop:kwp}
Assume \(\SVC^+(S)\). The following are equivalent:
\begin{enumerate}
\item \(\KWP_\alpha\);
\item There is \(\eta\in\Ord\) such that \(\abs{S}\leq\abs{\power^\alpha(\eta)}\).
\end{enumerate}
\end{prop}

\begin{rk}
Given that \(\KWP_0\) and \(\KWP_0^\ast\) are both equivalent to \(\AC\), \cref{prop:kwp-ast,prop:kwp} give new context to \cref{prop:ac}.
\end{rk}

\subsection{The partition principle}

The partition principle \(\PP\) says ``for all \(X\) and \(Y\), \(\abs{X}\leq\abs{Y}\) if and only if \(\abs{X}\leq^\ast\abs{Y}\)''. Note that the forward implication always holds. By \(\PP\res X\) we mean the partition principle for subsets of \(X\): If \(A,B\subseteq X\) and \(\abs{A}\leq^\ast\abs{B}\) then \(\abs{A}\leq\abs{B}\). We instead write \(\PP(X)\) to mean ``for all \(A\), if \(\abs{A}\leq^\ast\abs{X}\) then \(\abs{A}\leq\abs{X}\)''.

\begin{prop}\label{prop:pp}
Assume \(\SVC^+(S)\). The following are equivalent:
\begin{enumerate}
\item \(\PP\);
\item \(\PP\res S\) and \(\AC_\WO\).
\end{enumerate}
\end{prop}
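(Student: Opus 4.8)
The plan is to prove both directions. The forward direction ($\PP$ implies $\PP\res S$ and $\AC_\WO$) should be quick: $\PP\res S$ is immediate since it is just $\PP$ restricted to subsets of $S$, and $\AC_\WO$ follows because $\PP$ is known to imply $\AC_\WO$ (indeed, under $\PP$, a surjection $\alpha\to X$ gives an injection $X\to\alpha$, so every set that is an image of an ordinal is well-orderable, which yields choice for well-ordered families). The substantive direction is the converse.

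**For the converse,** assume $\PP\res S$ and $\AC_\WO$, and let $\abs{X}\leq^\ast\abs{Y}$; I want $\abs{X}\leq\abs{Y}$. Using $\SVC^+(S)$, I would take minitial embeddings to place both $X$ and $Y$ inside products $S\times\eta$. The goal is to reduce the partition relation $\abs{X}\leq^\ast\abs{Y}$, which lives at the level of arbitrary sets, down to a partition relation between subsets of $S$ (where $\PP\res S$ applies) together with well-ordered bookkeeping (controlled by $\AC_\WO$). Concretely, I expect to fix a surjection $Y\to X$ and, for each $s\in S$, look at the ``fibre structure'' of how $Y$ and $X$ sit over the copy of $S$; the minitial embeddings guarantee that for each $s$ the relevant slices are initial segments $\Set{s}\times\eta_s$, so the second coordinate is always ordinal-indexed and hence handled by $\AC_\WO$ via \cref{cor:acwo-equiv} and \cref{prop:reflect-ac-lambda}.

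**The key steps, in order,** are: (i) realise $X,Y\subseteq S\times\eta$ as minitial sets; (ii) from the surjection witnessing $\abs{X}\leq^\ast\abs{Y}$, extract for each $s\in S$ a set $A_s\subseteq S$ recording which seed-coordinates of $Y$ map onto the $s$-slice of $X$, so that the $S$-level relation becomes an instance of $\PP\res S$; (iii) apply $\PP\res S$ to convert the resulting $\leq^\ast$ relation among subsets of $S$ into an injection; and (iv) use $\AC_\WO$ to uniformly and coherently choose the ordinal-level witnesses across all $s$, assembling these into a global injection $X\to Y$. Throughout, minitiality is what keeps the ordinal components honest (each slice is a genuine initial segment), which is why the hypothesis is $\SVC^+(S)$ rather than $\SVC(S)$.

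**The hard part will be** step (ii)--(iv): correctly isolating the purely $S$-level combinatorial data so that a single application of $\PP\res S$ suffices, and then verifying that the choices supplied by $\AC_\WO$ glue into an injection rather than merely a partial surjection. The danger is a circularity in which recovering the injection on the $S$-coordinate secretly requires choice on $S$-indexed (not ordinal-indexed) families; the minitiality of the embeddings, by forcing every fibre to be an initial segment of $\eta$, is precisely the device I would use to ensure that all genuinely non-well-orderable choices are absorbed into the single $\PP\res S$ application, leaving only well-ordered choice for the rest.
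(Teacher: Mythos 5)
Your forward direction reaches the right conclusion for the wrong reason. The parenthetical argument is vacuous: in \(\ZF\), \emph{every} surjection \(h\colon\alpha\to X\) from an ordinal already splits (send \(x\) to \(\min h^{-1}(\Set{x})\)), so ``every image of an ordinal is well-orderable'' is a theorem of \(\ZF\) and cannot yield \(\AC_\WO\). The implication \(\PP\Rightarrow\AC_\WO\) is genuinely nontrivial and runs in the other direction: \(\PP\) forces \(\aleph(X)=\aleph^\ast(X)\) for every \(X\) (surjections \emph{from} \(X\) \emph{onto} ordinals become injections of ordinals into \(X\)), and one then invokes the known equivalence of that equality with \(\AC_\WO\), which is what the paper does. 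Citing the known implication, as your first clause does, is acceptable; the ``indeed'' is not a proof of it.

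The serious gap is in the converse, and it is exactly the circularity you flag at the end: you slice along the wrong coordinate. Your plan extracts, for each \(s\in S\), a set \(A_s\subseteq S\) and applies \(\PP\res S\) slice-by-slice. But \(\PP\res S\) only tells you that each set of injections is non-empty; to assemble a global injection \(X\to Y\) you must then \emph{select} one injection for each \(s\in S\), which is choice over an \(S\)-indexed family --- precisely what neither \(\PP\res S\) nor \(\AC_\WO\) (which governs only well-orderable index sets) can provide. Minitiality does not repair this: it controls the shape of each fibre (an initial segment of \(\eta\)), not the index set of the family of slice-wise choice problems, which remains \(S\) itself; and no ``single application of \(\PP\res S\)'' is ever exhibited, nor is one plausible, since the data needed is one injection per slice. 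The paper's proof turns the decomposition ninety degrees and slices along the \emph{ordinal} coordinate: given \(A,B\subseteq S\times\eta\) (no minitiality is needed here) and a surjection \(f\colon B\to A\), attach to each \(\tup{t,\alpha}\in A\) the least column \(\varep_{t,\alpha}<\eta\) of \(B\) containing one of its preimages. This partitions \(A\) into pieces \(A^\varep\) indexed by ordinals \(\varep\in E\subseteq\eta\), each of which is a surjective image of a set \(B^\varep\subseteq S\) via the canonical, choice-free map \(s\mapsto f(s,\varep)\). Now \(\PP\res S\) makes each set \(I^\varep\) of injections non-empty, and --- this is the point --- the family \(\Set{I^\varep\mid\varep\in E}\) is indexed by a set of ordinals, so \(\AC_\WO\) legitimately selects one injection per \(\varep\); the selections glue into an injection \(A\to B\) because pieces with distinct \(\varep\) land in distinct columns. (If one wants \(\PP\res S\) applied verbatim to subsets of \(S\), refine the slicing to pairs \(\tup{\varep,\alpha}\); the index set is still well-ordered, so nothing changes.) The moral: the decomposition must be indexed by ordinals so that all residual choices are well-ordered choices; indexing by \(S\) leaves you needing choice over \(S\), which is exactly what you do not have.
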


\begin{proof}
Certainly \(\PP\) implies \(\PP\res S\), and \(\PP\) implies ``for all \(X\), \(\aleph(X)=\aleph^\ast(X)\)'', which is equivalent to \(\AC_\WO\).\footnote{See the first lemma of \cite[Theorem~7]{pelc_weak_1978}, proof of which is attributed to Pincus by the author. In fact, the statement of the lemma in \cite{pelc_weak_1978} is ``\(\PP\) implies \(\AC_\WO\)'', but the proof only uses the assumption \((\forall X)\aleph(X)=\aleph^\ast(X)\), and the converse direction is straightforward. The proof can also be found in \cite[Theorem~3.1]{ryan_hartogs_2024}.} So instead assume \(\PP\res S\land\AC_\WO\). Let \(A,B\subseteq S\times\eta\) be such that \(\abs{A}\leq^\ast\abs{B}\), witnessed by \(f\colon B\to A\). We treat \(f\) as a partial surjection \(f\colon S\times\eta\to A\). For \(\tup{t,\alpha}\in A\), let
\begin{equation}
\varep_{t,\alpha}=\min\Set*{\varep<\eta\mid(\exists s\in S)f(s,\varep)=\tup{t,\alpha}}.
\end{equation}
Let \(B^{\tup{t,\alpha}}=\Set{s\in S\mid f(s,\varep_{t,\alpha})=\tup{t,\alpha}}\), and \(B^\varep=\bigcup\Set{B^{\tup{t,\alpha}}\mid\varep_{t,\alpha}=\varep}\). Let \(E=\Set{\varep<\eta\mid(\exists\tup{t,\alpha}\in A)\varep_{t,\alpha}=\varep}=\Set{\varep<\eta\mid B^\varep\neq\emptyset}\). For each \(\varep\in E\), let \(A^\varep=\Set{\tup{t,\alpha}\mid\varep_{t,\alpha}=\varep}\). Then \(\abs{A^\varep}\leq^\ast\abs{B^\varep}\), witnessed by \(s\mapsto f(s,\varep)\). Hence, by assumption, \(\abs{A^\varep}\leq\abs{B^\varep}\) and \(I^\varep=\Set{\text{injections }A^\varep\to B^\varep}\neq\emptyset\). By \(\AC_\WO\), let \(c\colon E\to\bigcup\Set{I^\varep\mid\varep\in E}\) be a choice function. Then
\begin{alignat}{3}
g\colon&&A&\to S\times\eta\\
&&\tup{t,\alpha}&\mapsto\tup{c(\varep_{t,\alpha})(t,\alpha),\varep_{t,\alpha}}
\end{alignat}
is an injection. Furthermore, for each \(\tup{t,\alpha}\in A\), \(f(g(t,\alpha))\in A^{\varep_{t,\alpha}}\). In particular, \(f(g(t,\alpha))\) is defined and so \(g\) is in fact an injection \(A\to B\) as required.
\end{proof}

\begin{qn}\label{qn:pp-improvement}
Can \cref{prop:pp} be improved to ``\(\PP\) is equivalent to \(\PP\res S\)''? Equivalently, is \(\AC_\WO\) a consequence of \(\SVC^+(S)\land\PP\res S\)?
\end{qn}

While we do not know if \(\AC_\WO\) is unnecessary in \cref{prop:pp}, we cannot weaken the requirement of \(\SVC^+(S)\) to \(\SVC(S)\), as \cref{prop:pp-optimal} demonstrates.

\begin{prop}\label{prop:pp-optimal}
Let \(M\) be the Feferman-style model \(\tmod\) from \cite{truss_models_1974}. That is, for \(G\subseteq\Add(\omega,\omega_1)\) an \(L\)-generic filter, we set
\begin{equation}
M=L\left(\Set*{\tup{c_\beta\mid\beta<\alpha}\mid\alpha<\omega_1}\right),
\end{equation}
where \(c_\beta\) is the \(\beta\)\textup{th} Cohen real introduced by \(G\). Then
\begin{equation}
M\vDash\AC_\WO\land\PP\res\bbR\land\SVC(\bbR)\land\lnot\PP.
\end{equation}
\end{prop}

\begin{proof}
Firstly, by \cite[Lemma~2.4]{truss_models_1974}, \(M\vDash\AC_\WO\).

By \cite[Theorem~3.2]{truss_models_1974}, every set of reals in \(M\) can either be well-ordered or contains a perfect subset. If \(A,B\subseteq\bbR\) and \(\abs{A}\leq^\ast\abs{B}\) then: If \(B\) can be well-ordered, \(\abs{A}\leq\abs{B}\); and if \(B\) contains a perfect subset then \(\abs{A}\leq\abs{\bbR}\leq\abs{B}\). Hence \(\PP\res\bbR\) holds.

By \cite[Lemma~2.3]{truss_models_1974}, \(M\vDash V=L(w(\bbR))\), where \(w(\bbR)\) is the set of well-orders of subsets of \(\bbR\). We aim to show that \(w(\bbR)\subseteq L(\bbR)\) and so \(M\vDash V=L(\bbR)\). In particular, this will show that \(M\vDash\SVC(\bbR)\).\footnote{If \(A\) is transitive and \(M=V(A)\), where \(V\vDash\ZFC\) then \(M\vDash\SVC([A]^\lomega)\) (see \cite{blass_injectivity_1979}). However, \(\ZF\vdash\abs{[\bbR]^\lomega}=\abs{\bbR}\).} As a consequence of \cite[Theorem~3.1]{truss_models_1974}, a set \(X\) of reals in \(M\) is well-orderable if and only if \(X\subseteq L[\tup{c_\beta\mid\beta<\alpha}]\) for some \(\alpha<\omega_1\). Hence, any well-ordered sequence \(f\colon\gamma\to\bbR\) in \(M\) is in fact an element of \(L[\tup{c_\beta\mid\beta<\alpha}]\) for some \(\alpha\). Since \(\alpha<\omega_1\), we may encode the entire sequence \(\tup{c_\beta\mid\beta<\alpha}\) as a single real \(c\), and so \(f\in L[c]\) for some \(c\in\bbR\). Hence, \(w(\bbR)\subseteq L(\bbR)\subseteq L(w(\bbR))\), and so \(M\vDash V=L(\bbR)\).

It remains to show that \(M\vDash\lnot\PP\), which we shall do this by showing that there is no injection \([\bbR]^\omega\to\bbR\) (noting that \(\ZF\vdash\abs{[\bbR]^\omega}\leq^\ast\abs{\bbR}\)).

Suppose that \(F\colon[\bbR]^\omega\to\bbR\) is such an injection, and assume that it is \(L\)-definable. Let \(\bbP=\Add(\omega,\omega_1)\) and, for \(p\in\bbP\), we define the support of \(p\), \(\supp(p)\), to be \(\Set{\beta<\omega_1\mid(\exists n<\omega)\tup{\beta,n}\in\dom(q)}\in[\omega_1]^\lomega\). Since \(F\) is definable in \(L\), \(F\) has a \(\bbP\)-name \(\ddF\) such that for all \(\sigma\in\Aut(\bbP)\) (where \(\Aut(\bbP)\) is the automorphism group of \(\bbP\)), \(\sigma\ddF=\ddF\).\footnote{In fact \(\ddF=\sigma\ddF\) for all automorphisms \(\sigma\) of the Boolean completion of \(\bbP\).} For \(\beta<\omega_1\), we define
\begin{equation}
\ddc_\beta=\Set*{\tup{p,\check{n}}\mid p\in\bbP,n<\omega,p(\beta,n)=1},
\end{equation}
so \(\ddc_\beta\) is a name for \(c_\beta\). Given a permutation \(\pi\) of \(\omega_1\), define \(\hat{\pi}\in\Aut(\bbP)\) by \(\hat{\pi}p(\pi(\alpha),n)=p(\alpha,n)\) for all \(p\in\bbP\) and \(\tup{\alpha,n}\in\omega_1\times\omega\). Note that for such automorphisms, \(\hat{\pi}\ddc_\beta=\ddc_{\pi(\beta)}\). Let \(\ddC=\Set{\ddc_n\mid n<\omega}^\bullet\).\footnote{That is, \(\ddC=\Set{\tup{\1,\ddc_n}\mid n<\omega}\).} Then \(C=\ddC^G\in[\bbR]^\omega\cap M\). Let \(p_0\in\bbP\) be such that \(p_0\forces``\ddF\) is an injection \([\dot{\bbR}]^\omega\to\dot{\bbR}"\).

Suppose that for some \(\bbP\)-name \(\ddx\) and some \(p\leq p_0\), \(p\forces\ddF(\ddC)=\ddx\). Suppose also that for some \(q\leq p\) and some \(n<\omega\), \(q\forces\check{n}\in\ddx\).
\begin{claim}
\(q\res\supp(p)\times\omega\forces\check{n}\in\ddx\).
\end{claim}

\begin{poc}
We shall show that for all \(r\leq q\res\supp(p)\times\omega\), \(r\not\forces\check{n}\notin\ddx\). Let \(r\leq q\res\supp(p)\times\omega\) be arbitrary. Then there is a permutation \(\pi\) of \(\omega_1\) such that \(\pi``\omega=\omega\), \(\pi\) fixes \(\supp(p)\) pointwise, and \(\supp(r)\cap \pi``\supp(q)\subseteq\supp(p)\) (noting that \(\supp(p)\), \(\supp(q)\), and \(\supp(r)\) are all finite). Then \(\hat{\pi}p=p\), \(\hat{\pi}\ddC=\ddC\), \(\hat{\pi}\ddF=\ddF\), and so \(\hat{\pi}q\forces\ddF(\ddC)=\hat{\pi}\ddx\). However, since \(\hat{\pi}q\leq\hat{\pi}p=p\), \(\hat{\pi}q\forces\ddF(\ddC)=\ddx\) as well, and thus \(\hat{\pi}q\forces\hat{\pi}\ddx=\ddx\). Furthermore, \(\hat{\pi}q\) and \(r\) have a common extension (namely \(\hat{\pi}q\cup r\)), and so \(\hat{\pi}q\cup r\forces\check{n}\in\ddx\). Therefore, \(r\not\forces\check{n}\notin\ddx\). Since no \(r\leq q\res\supp(p)\times\omega\) forces \(\check{n}\notin\ddx\), we must have that \(q\res\supp(p)\times\omega\forces\check{n}\in\ddx\).
\end{poc}

By the claim, we may assume that \(\ddx\) is an \(\Add(\omega,\supp(p))\)-name.

Let \(\beta,\beta'\in\omega_1\setminus\supp(p)\) be such that \(\beta<\omega\) and \(\beta'\geq\omega\), and let \(\pi\) be the transposition \(\begin{pmatrix}\beta&\beta'\end{pmatrix}\). Then \(\hat{\pi}p=p\) and \(\hat{\pi}\ddx=\ddx\), so \(p\forces\ddF(\hat{\pi}\ddC)=\ddx\). However, \(p\forces\ddc_{\beta'}\in\hat{\pi}\ddC\setminus\ddC\), and so \(p\forces\hat{\pi}\ddC\neq\ddC\), contradicting that \(p\forces``\ddF\) is an injection''.

In the case that \(F\) requires a real parameter, say \(c\), we note that by the c.c.c.\ of \(\Add(\omega,\omega_1)\), \(c\) has an \(\Add(\omega,\alpha)\)-name for some \(\alpha<\omega_1\). By working in \(L[G\res\alpha]\) (where \(G\res\alpha=\Set{p\in G\mid\dom(p)\subseteq\alpha\times\omega}\)) rather than \(L\), and noting that the quotient of \(\Add(\omega,\omega_1)\) by \(\Add(\omega,\alpha)\) (the \(\Add(\omega,\alpha)\)-name for the `rest of the forcing', so \(\Set{\tup{p\res\alpha\times\omega,p}\mid p\in\Add(\omega,\omega_1)}\)) is isomorphic to \(\Add(\omega,\omega_1)\), the same result follows (using \(\Set{\ddc_\beta\mid\alpha\leq\beta<\alpha+\omega}^\bullet\) instead of \(C\)).
\end{proof}

Even though we cannot improve \cref{prop:pp} to \(\SVC(S)\) as written, we can if we additionally assume the stronger axiom \(\PP(S)\), rather than merely \(\PP\res S\).

\begin{prop}\label{prop:pp-not-res}
\(\SVC(S)\land\PP(S)\land\AC_\WO\) implies \(\SVC^+(S)\).
\end{prop}

\begin{proof}
Let \(A\) be a set. By \(\SVC(S)\) there is a surjection \(h\colon S\times\eta\to A\) for some ordinal \(\eta\). For \(a\in A\), let \(\alpha_a=\min\Set{\alpha<\eta\mid(\exists s\in S)h(s,\alpha)=a}\). For \(\alpha<\eta\), let \(A_\alpha=\Set{a\in A\mid\alpha_a=\alpha}\). Then \(s\mapsto h(s,\alpha)\) is a (partial) surjection \(S\to A_\alpha\) for all \(\alpha\), and so, by \(\PP(S)\), \(\abs{A_\alpha}\leq\abs{S}\). Using \(\AC_\WO\), we may simultaneously pick injections \(i_\alpha\colon A_\alpha\to S\) for all \(\alpha<\eta\). Then \(a\mapsto\tup{i_{\alpha_a}(a),\alpha_a}\) is an injection \(A\to S\times\eta\); indeed, if \(\tup{i_{\alpha_a}(a),\alpha_a}=\tup{i_{\alpha_b}(b),\alpha_b}\) then \(\alpha_a=\alpha_b\), so \(i_{\alpha_a}(a)=i_{\alpha_a}(b)\), which implies \(a=b\) as the \(i_\alpha\) are injective. \(A\) was arbitrary, so we conclude \(\SVC^+(S)\).
\end{proof}

\begin{cor}\label{cor:pp-not-res-pp-implication}
Assume \(\SVC(S)\). The following are equivalent:
\begin{enumerate}
\item \(\PP\);
\item \(\PP\res S\), \(\PP(S)\), and \(\AC_\WO\).
\end{enumerate}
\end{cor}

\begin{proof}
Certainly \(\PP\) implies each of \(\PP\res S\), \(\PP(S)\), and \(\AC_\WO\), so instead assume \(\PP\res S\), \(\PP(S)\) and \(\AC_\WO\). By \cref{prop:pp-not-res}, \(\SVC^+(S)\) holds, and so by \cref{prop:pp}, \(\PP\) holds.
\end{proof}

In \cref{lem:pp-not-res-optimal} below, we prove that ``\(\PP\res S\land\AC_\WO\)'' cannot be omitted from \cref{cor:pp-not-res-pp-implication}.

\begin{prop}\label{lem:pp-not-res-optimal}
Let \(M=L(A)\) be Cohen's first model, where \(L\subseteq M \subseteq L[G]\) for \(L\)-generic \(G\subseteq\Add(\omega,\omega)\). Then
\begin{equation}
M\vDash\SVC^+(\bbR)\land\PP(\bbR)\land\lnot\PP.
\end{equation}
\end{prop}

\begin{proof}
For an overview of Cohen's first model and the proof of \(\SVC^+(\bbR)\), see \cite[Sections~5.3 and 5.5]{jech_axiom_1973}. Within is also a proof that there is an infinite Dedekind-finite set of reals in \(M\), contradicting \(\AC_\WO\) (and hence, by \cref{prop:pp}, \(\PP\)).

The proof that \(M\vDash\PP(\bbR)\) is due to Elliot Glazer and Assaf Shani. Suppose that \(f\colon\bbR\to X\) is a surjection and, using \(\SVC^+(\bbR)\), assume that \(X\subseteq\bbR\times\eta\) for some minimal \(\eta\). In \(L[G]\), \(2^{\aleph_0}=\aleph_1\), and (since \(\eta\) is minimal) \(f\) induces a surjection \(\bbR\to\eta\). Therefore \(\eta<\omega_2\). In \(M\), \(\abs{\omega_1}\leq\abs{\bbR}\), and so
\begin{equation}
\abs{X}\leq\abs{\bbR\times\eta}\leq\abs{\bbR\times\omega_1}\leq\abs{\bbR\times\bbR}=\abs{\bbR}.\qedhere
\end{equation}
\end{proof}

Furthermore, the model \(\tmod\) from \cref{prop:pp-optimal} shows that we cannot omit the \(\PP\res S\) requirement from \cref{cor:pp-not-res-pp-implication}.

\begin{prop}\label{lem:pp-not-r-pp-implication-optimal}
Let \(M\) be the Feferman-style model \(\tmod\) from \cite{truss_models_1974} (and \cref{prop:pp-optimal}). Then
\begin{equation}
M\vDash\AC_\WO\land\PP(\power(\bbR))\land\SVC^+(\power(\bbR))\land\lnot\PP.
\end{equation}
\end{prop}

\begin{proof}
We already saw in \cref{prop:pp-optimal} that \(M\) is a model of \(\AC_\WO\land\SVC(\bbR)\land\lnot\PP\). By \(\SVC(\bbR)\), \(\SVC^+(\power(\bbR))\) holds.

The proof of \(\PP(\power(\bbR))\) is similar to \cref{lem:pp-not-res-optimal}. Suppose \(\abs{X}\leq^\ast\abs{\power(\bbR)}\), witnessed by \(f\). We may assume that \(X\subseteq\power(\bbR)\times\eta\) for minimal \(\eta\). Since the outer model \(L[G]\vDash\abs{\power(\bbR)}=\aleph_2\), and \(f\) induces a surjection \(\power(\bbR)^M\to\eta\), we must have that \(\eta<\omega_3\). In \(M\), \(\abs{\omega_2}\leq\abs{\power(\bbR)}\) and so
\begin{equation}
\abs{X}\leq\abs{\power(\bbR)\times\eta}\leq\abs{\power(\bbR)\times\omega_2}\leq\abs{\power(\bbR)^2}=\abs{\power(\bbR^2)}=\abs{\power(\bbR)}.\qedhere
\end{equation}
\end{proof}

\section{Acknowledgements}

The author would like to thank Asaf Karagila for feedback on early versions of this text, and for suggesting the model exhibited in \cref{prop:pp-optimal}; Elliot Glazer and Assaf Shani for sharing their proof that Cohen's first model satisfies \(\PP(\bbR)\); and the referee for providing helpful feedback that improved both the clarity and accuracy of this text.

\bibliographystyle{amsplain}
\bibliography{local-reflections-of-choice}

\end{document}